\definecolor{olive}{rgb}{0.3, 0.4, .1}			    
\definecolor{fore}{RGB}{249,242,215}			    
\definecolor{back}{RGB}{51,51,51}				    
\definecolor{title}{RGB}{255,0,90}				    
\definecolor{dgreen}{rgb}{0.,0.6,0.}			    
\definecolor{gold}{rgb}{1.,0.8,0.}				    
\definecolor{JungleGreen}{cmyk}{0.99,0,0.52,0}	    
\definecolor{BlueGreen}{cmyk}{0.85,0,0.33,0}	    
\definecolor{RawSienna}{cmyk}{0,0.72,1,0.45}	    
\definecolor{Magenta}{cmyk}{0,1,0,0}
\newtheorem{theo}{Theorem}[section]
\newtheorem{lemma}{Lemma}[section]
\newtheorem{prop}{Proposition}[section]
\newtheorem{cor}{Corollary}[section]
\theoremstyle{definition}
\newtheorem{rem}{Remark}[section]
\numberwithin{equation}{section}
\newcommand{\R}{\mathbb R}
\newcommand{\N}{\mathbb N}
\newcommand{\Q}{\mathcal Q}
\newcommand{\de}{\partial}
\newcommand{\eps}{\varepsilon}
\newcommand{\ds}{\displaystyle}
\DeclareMathOperator{\divergenza}{div}
\begin{document}
\title[Quasilinear operators with Robin boundary conditions]{Optimizing the first eigenvalue of some quasilinear operators with respect to the boundary conditions}
 \author{Francesco Della Pietra}
       \address{Universit\`a degli studi di Napoli Federico II, Dipartimento di Matematica e Applicazioni ``R. Caccioppoli'', Via Cintia, Monte S. Angelo - 80126 Napoli, Italia.}
       \email{f.dellapietra@unina.it}
 \author{Nunzia Gavitone}
       \address{Universit\`a degli studi di Napoli Federico II, Dipartimento di Matematica e Applicazioni ``R. Caccioppoli'', Via Cintia, Monte S. Angelo - 80126 Napoli, Italia.}
       \email{nunzia.gavitone@unina.it}
  \author{Hynek Kova\v{r}\'ik}
  \address{Universit\`a degli Studi di Brescia, DICATAM, Sezione di Matematica, Via Branze 38, 25123 Brescia, Italy}
  \email{hynek.kovarik@unibs.it}
 
 %%%%%%%%%%%%%%%%%%%%%%%%%%%%%%%%%%%%%%%%%%%%%%%%%%%%%%%%%%%%
\begin{abstract}
We consider a class of quasilinear operators on a bounded domain $\Omega\subset \R^n$  and address the question of optimizing the first eigenvalue with respect to the boundary conditions, which are of the Robin-type. We describe the optimizing boundary conditions and establish upper and lower bounds on the respective maximal and minimal eigenvalue. 
\end{abstract}
%%%%%%%%%%%%%%%%%%%%%%%%%%%%%%%%%%%%%%%%%%%%%%%

\maketitle

{\bf  Keywords:}
Robin boundary conditions, optimization problem, $p-$Laplacian\\

{\bf  MSC 2010: 35P15 (35P30, 35J60)}

%%%%%%%%%%%%%%%%%%%%%%%%%%%%%%%%%%%
\section{\bf Introduction}
The problem of optimizing first eigenvalues of certain differential operators is well-known from the literature mainly in connection with the so-called shape optimization. The latter means that one looks for a domain which minimizes (or maximizes) the first eigenvalue under some geometrical constraint, typically keeping the volume fixed. The answer in the case of the Laplace operator is that the minimum is achieved by a ball with the prescribed volume. This was proved in \cite{Fa} and \cite{Kra} for Dirichlet boundary conditions and in \cite{boss} for Robin boundary conditions. Various generalizations and improvements of these results appeared recently, see for example  \cite{bg1},\cite{bg2},\cite{daifu}, \cite{dan}, \cite{pota}, \cite{FNT15}, \cite{FK} and references therein. Another type of shape optimization, concerning domains with holes, was studied in \cite{him}, \cite{hen}, \cite{her}. 

\smallskip 

In this paper we analyze a different optimization problem; we keep a bounded domain $\Omega \subset \R^n$ fixed and vary the boundary conditions. More precisely, we consider the variational problem 
\begin{equation} \label{eq-varprob}
\inf_{u\in W^{1,p}(\Omega)}  \dfrac{\ds \int_\Omega |\nabla u|^p \, dx + \ds \int_{\de
     \Omega}\sigma\, |u|^p\, d\mathcal H^{n-1}}{\ds \int_\Omega |u|^p dx}\, , \qquad p> 1, 
\end{equation} 
and ask which function $\sigma: \partial\Omega\to [0,\infty[$ minimizes or maximizes \eqref{eq-varprob} under the condition 
\begin{equation} \label{m-cond}
\int_{\partial\Omega} \sigma \, d\mathcal H^{n-1} = m, 
\end{equation}
where $m$ is a positive constant. Under certain regularity conditions on $\Omega$ the infimum in \eqref{eq-varprob} is a minimum and the corresponding minimizer solves an eigenvalue equation for the $p-$Laplace operator with Robin-type boundary conditions, see Section \ref{sec-prelim} and equation \eqref{robinp} for details.

Our main results can be summarized as follows. We show that for sufficiently regular $\Omega$ the maximizing $\sigma$ always exists and is unique. In fact we provide its explicit construction, see Theorem \ref{thm3.1}. For examle if $\Omega$ is a ball then it turns out that the maximizing $\sigma$ is constant, see Remark \ref{rem-ball}. 

As for the minimum, we find that as soon as $n>1$ there is no $\sigma$ which minimizes \eqref{eq-varprob}  in the class of nonnegative functions satisfying \eqref{m-cond}. Moreover, if $p\leq n$, then the infimum of \eqref{eq-varprob} over $\sigma$ belonging to this class is zero, see Proposition \ref{inf0}. 
However, if $p>n$, then this infimum is positive, see Theorem \ref{thmp>n}, and is achieved in the class of  Dirac measures on $\partial\Omega$ of total mass $m$. In other words, it is achieved if $\sigma$ in \eqref{eq-varprob} is replaced by a Dirac measure concentrated at a point of the boundary, see Theorem \ref{carmin}.  The position of this point, which might not be unique, depends of course on $m$, but it is possible to describe its asymptotic behavior as $m\to\infty$. This is done in Proposition \ref{prop-xm}. 

Let us briefly outline the structure of the paper. In Section \ref{sec-prelim} we fix the necessary notation and provide some preliminary results which will be needed later. Section \ref{sec-sup} is devoted to the analysis of the $\sigma$ which maximizes \eqref{eq-varprob}. The minimum, or more precisely, the infimum is treated in Section \ref{sec-inf}. It is of course natural to ask how big or small the maximum and the minimum (or infimum) of \eqref{eq-varprob} are. Obviously, this depends on $m$ and on $\Omega$. In Section \ref{sec-estimates} we provide upper and lower bounds on these quantities and study their limits for $m\to 0$ and $m\to\infty$. 

%%%%%%%%%%%%%%%%%%%%%%%%%%%%%%%%%%%%%
\section{\bf Notation and preliminaries}
\label{sec-prelim}
Throughout the paper we will assume that $\Omega\subset\R^{n}$ is a bounded domain with $C^{1,\eps}$ regular boundary, and $1<p<+\infty$. We recall that under this assumption, the standard trace embedding theorem, see e.g.~\cite{adams}, assures that there exists a constant $C=C(\Omega,p,q)$ such that
\begin{equation}
\label{trace}
\|u\|_{L^{q}(\de\Omega)} \le C \|u\|_{W^{1,p}(\Omega)}, \text{ for } 
\left\{
\begin{array}{ll}
q= \frac{p(n-1)}{n-p}&\text{if }p<n,\\
q<+\infty &\text{if }p=n\\
q=+\infty &\text{if }p>n.
\end{array}
\right.
\end{equation}
Let us assume that $\sigma\in L^{1}(\de\Omega)$ is nonnegative, and consider the following Robin eigenvalue problem:
\begin{equation}
\label{mainvar}
\ell_{1}(\sigma,\Omega) = 
\inf_{\substack{u\in W^{1,p}(\Omega)\\ u\neq 0}} \Q[\sigma,u],
\end{equation}
where
\[
\Q [\sigma,u]= \dfrac{\ds \int_\Omega |\nabla u|^p dx + \ds \int_{\de
     \Omega}\sigma(x)|u|^pd\mathcal H^{n-1}}{\ds \int_\Omega |u|^p dx}
\]
if the right-hand side is finite, otherwise $\Q[\sigma,u]=+\infty$.

\smallskip

If $v\in W^{1,p}(\Omega)$ is a minimizer of \eqref{mainvar}, then it is a weak solution of the following Robin boundary value problem
 \begin{equation}
\label{robinp}
  \left\{
    \begin{array}{rcll}
    -\Delta_{p}v& =& \ell_{1}(\sigma,\Omega) |v|^{p-2}v
      &\text{in } 
      \Omega,\\[.3cm] 
      |\nabla v|^{p-2}\dfrac{\de v}{\de \nu} & = &-\sigma(x) |v|^{p-2}v      &\text{on } \de\Omega,
    \end{array}
  \right.
\end{equation} 
where   $\Delta_{p}v=\divergenza\left(|\nabla v|^{p-2}\nabla v\right)$ is the $p$-Laplace operator. By a weak solution to \eqref{robinp} we mean a function $v\in W^{1,p}(\Omega)$ such that $\sigma |v|^{p}\in L^{1}(\de\Omega)$ and
\begin{equation}
\label{defsol}
\int_{\Omega} |\nabla  v|^{p-2}\nabla v\cdot \nabla \varphi\,dx +\int_{\de\Omega}\sigma(x)|v|^{p-2}v\,\varphi\,d\mathcal H^{n-1}=
\ell_{1}(\sigma,\Omega)\int_{\Omega}|v|^{p-2}v\,\varphi\, dx
\end{equation}
for any test function $\varphi\in W^{1,p}(\Omega)\cap L^{\infty} (\de\Omega)$. The following result holds.
\begin{prop}
\label{lemma1}
Let $\sigma\in L^{1}(\de \Omega)$, and $\sigma \ge 0$. Then there exists a positive minimizer $u_{p}\in W^{1,p}(\Omega)$ of \eqref{mainvar}, which is a weak solution of  \eqref{robinp} in $\Omega$. Moreover, if $\sigma$ is positive on $\Gamma \subseteq \de\Omega$ such that the $(n-1)$-Hausdorff measure $\mathcal H^{n-1}(\Gamma)>0$, then $\ell_{1}(\sigma,\Omega)>0$. Finally, $\ell_{1}(\sigma,\Omega)$ is simple, that is $u_{p}$ is unique up to a multiplicative constant.
\end{prop}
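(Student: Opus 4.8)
The plan is to prove the four assertions in turn: existence of a minimizer, its strict positivity, the Euler--Lagrange equation \eqref{robinp} together with the positivity of $\ell_{1}(\sigma,\Omega)$, and finally simplicity, which I expect to be the main obstacle. For existence I would use the direct method. By $p$-homogeneity of $\Q[\sigma,\cdot]$ I restrict to $\set{u\in W^{1,p}(\Omega):\|u\|_{L^{p}(\Omega)}=1}$ and minimize the numerator $N[u]:=\int_{\Omega}|\nabla u|^{p}\,dx+\int_{\de\Omega}\sigma|u|^{p}\,d\mathcal H^{n-1}$. For a minimizing sequence $(u_{k})$, nonnegativity of $\sigma$ gives $\int_{\Omega}|\nabla u_{k}|^{p}\le N[u_{k}]$ bounded, so $(u_{k})$ is bounded in $W^{1,p}(\Omega)$; using $1<p<\infty$ I extract $u_{k}\rightharpoonup u$ weakly, and Rellich gives $u_{k}\to u$ in $L^{p}(\Omega)$, so $\|u\|_{L^{p}(\Omega)}=1$ and $u\neq0$. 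Weak lower semicontinuity of the gradient term is convexity; the boundary term is the delicate point since $\sigma$ is only $L^{1}$. Here I would invoke the \emph{compact} trace embedding (the critical exponent in \eqref{trace} is strictly larger than $p$ for every $p>1$), so $u_{k}\to u$ in $L^{p}(\de\Omega)$ and, along a subsequence, a.e.\ on $\de\Omega$; Fatou's lemma then yields $\int_{\de\Omega}\sigma|u|^{p}\le\liminf_{k}\int_{\de\Omega}\sigma|u_{k}|^{p}$. Hence $N[u]\le\inf N$ and $u$ is a minimizer. Since $\Q[\sigma,|u|]=\Q[\sigma,u]$, the minimizer may be taken nonnegative, and interior $C^{1,\alpha}$ regularity for the $p$-Laplacian together with the strong maximum principle (or the Harnack inequality) upgrades $u\ge0$, $u\not\equiv0$ to $u>0$ in $\Omega$.

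That $u$ solves \eqref{robinp} weakly is the Euler--Lagrange equation: writing $\Q[\sigma,u]=N[u]/D[u]$ with $D[u]=\int_{\Omega}|u|^{p}$ and imposing $\frac{d}{dt}\Q[\sigma,u+t\varphi]\big|_{t=0}=0$ for $\varphi\in W^{1,p}(\Omega)\cap L^{\infty}(\de\Omega)$, then dividing by the common factor $p$ and using $\ell_{1}(\sigma,\Omega)=N[u]/D[u]$, reproduces \eqref{defsol} verbatim; the choice of the test space $W^{1,p}(\Omega)\cap L^{\infty}(\de\Omega)$ is exactly what makes $t\mapsto\int_{\de\Omega}\sigma|u+t\varphi|^{p}$ differentiable despite $\sigma\in L^{1}$. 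For positivity of $\ell_{1}(\sigma,\Omega)$ I argue by contradiction: if $\ell_{1}(\sigma,\Omega)=0$ then the normalized minimizer satisfies $\int_{\Omega}|\nabla u|^{p}+\int_{\de\Omega}\sigma|u|^{p}=0$, and both nonnegative terms vanish. Vanishing of the gradient and connectedness of $\Omega$ force $u\equiv c$, while $|c|^{p}\int_{\Gamma}\sigma\,d\mathcal H^{n-1}=0$ with $\sigma>0$ on $\Gamma$, $\mathcal H^{n-1}(\Gamma)>0$, forces $c=0$, contradicting $\|u\|_{L^{p}(\Omega)}=1$.

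The simplicity is where the real work lies, and I would prove it by hidden convexity. Given two positive normalized minimizers $u,v$, set $w=\big(\tfrac12 u^{p}+\tfrac12 v^{p}\big)^{1/p}$. Writing $\nabla\log w$ as the convex combination $\lambda\,\nabla\log u+(1-\lambda)\nabla\log v$ with $\lambda=\tfrac12 u^{p}/w^{p}\in(0,1)$ and using strict convexity of $\xi\mapsto|\xi|^{p}$, one obtains pointwise $|\nabla w|^{p}\le\tfrac12|\nabla u|^{p}+\tfrac12|\nabla v|^{p}$, hence $\int_{\Omega}|\nabla w|^{p}\le\tfrac12\int_{\Omega}|\nabla u|^{p}+\tfrac12\int_{\Omega}|\nabla v|^{p}$. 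On the boundary $w^{p}=\tfrac12 u^{p}+\tfrac12 v^{p}$ \emph{exactly}, so $\int_{\de\Omega}\sigma w^{p}$ and $\int_{\Omega}w^{p}$ are affine in the two profiles; in particular $\|w\|_{L^{p}(\Omega)}=1$ and $N[w]\le\tfrac12 N[u]+\tfrac12 N[v]=\ell_{1}(\sigma,\Omega)$. Since $\Q[\sigma,w]=N[w]\ge\ell_{1}(\sigma,\Omega)$, equality holds throughout, forcing equality a.e.\ in the strict-convexity step, i.e.\ $\nabla\log u=\nabla\log v$ a.e.; connectedness then gives $u=cv$, and the common normalization forces $c=1$. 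The only genuinely delicate points here are justifying $w\in W^{1,p}(\Omega)$ with the asserted gradient (a standard truncation/regularization, since $u,v$ may vanish near $\de\Omega$) and pinning down the equality case of the convexity inequality.
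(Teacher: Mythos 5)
Your proposal is correct and follows essentially the same route as the paper: the direct method with the compact trace embedding, a.e.\ convergence on $\de\Omega$ and Fatou's lemma for existence, Harnack (strong maximum principle) for positivity of the minimizer, and the identical contradiction argument (a constant limit function forced to vanish by $\mathcal H^{n-1}(\Gamma)>0$) for the positivity of $\ell_{1}(\sigma,\Omega)$. The only place you go beyond the paper's text is simplicity, which the paper dispatches by citing ``standard arguments'' (Belloni--Kawohl, Dai--Fu, Della Pietra--Gavitone); your hidden-convexity argument along $w=\bigl(\tfrac12 u^{p}+\tfrac12 v^{p}\bigr)^{1/p}$ is precisely that standard argument, and your execution of it, including the flagged technical points on the chain rule and the equality case, is sound.
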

\begin{proof}
Let $\varphi_{k}\in W^{1,p}(\Omega)$ be a minimizing sequence of \eqref{mainvar} such that $\|\varphi_{k}\|_{L^{p}(\Omega)}=1$. Then, being $\varphi_{k}$ bounded in $W^{1,p}(\Omega)$ and using the Rellich Theorem, there exists a subsequence, still denoted by $\varphi_{k}$, which weakly converges to a function $u_{p}\in W^{1,p}(\Omega)$, with $\|u_{p}\|_{L^{p}(\Omega)}=1$. The quoted trace inequality \eqref{trace} gives that, in particular, $\varphi_{k}$ converges almost everywhere on $\de\Omega$ to $u_{p}$. By Fatou's Lemma,
\[
\ell_{1}(\sigma, \Omega) =\lim_{k\rightarrow +\infty} \mathcal Q[\sigma,\varphi_{k}] \ge \mathcal Q[\sigma,u_{p}].
\]
Then $u_{p}$ is a minimum. To compete the proof of the first part of the Lemma we observe that $|u_{p}|$ is still a minimum, and then by the Harnack inequality $|u_{p}|>0$.

Finally, suppose by contradiction that there exists $\sigma_{1}>0$ on $\Gamma\subseteq \de \Omega$, with $\mathcal H^{n-1}(\Gamma)>0$ such that $\ell_{1}(\sigma_{1},\Omega)=0$. Then there exists $u_{\sigma_{1}}\in W^{1,p}(\Omega)$ such that $\|u_{\sigma_{1}}\|_{L^p(\Omega)}=1$ and
\[
0=\ell_{1}(\sigma_{1},\Omega)=\int_{\Omega}|\nabla u_{\sigma_{1}}|^{p} dx +\int_{\de\Omega} \sigma_{1}|u_{\sigma_{1}}|^{p}d\mathcal H^{n-1}.
\]
Hence $u_{\sigma_{1}}$ is constant in $\bar\Omega$, and $|u_{\sigma_1}|^{p}\int_{\de\Omega}\sigma_{1} d\mathcal H^{n-1}=0$. The hypothesis on $\sigma_{1}$ implies that $u_{\sigma_{1}}\equiv0$ in $\bar\Omega$, and this is impossible. Hence $\ell_{1}(\sigma,\Omega)>0$. The simplicity of $\ell_{1}(\sigma,\Omega)$ follows by standard arguments, see for example \cite{daifu} or \cite{bk,pota}.
\end{proof}

\noindent To conclude this section, we point out that any nonnegative eigenfunction must be a first eigenfunction. 

\begin{prop}
\label{pota}
Any nonnegative function $v\in W^{1,p}(\Omega)$, $v\not\equiv 0$, which satisfies
	\begin{equation}
	\label{pbdue}
 	\left\{
   	\begin{array}{rll}
      	-\Delta_p v &= \eta\, v^{p-1}
      	&\text{in } \Omega,\\[.2cm] 
      	|\nabla v|^{p-2}\, \nabla v\cdot \nu &=-\sigma(x)\, v^{p-1}
      	&\text{on } \de\Omega,
    	\end{array}
  \right.
	\end{equation}
in the weak sense, is a first eigenfunction of \eqref{pbdue}, that is $\eta=\ell_{1}(\sigma,\Omega)$ and $v=u_{p}$, where $u_{p}$ is given in Proposition \ref{lemma1}, up to a multiplicative constant.
\end{prop}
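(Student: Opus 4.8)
The plan is to prove the two inequalities $\eta\ge\ell_1(\sigma,\Omega)$ and $\eta\le\ell_1(\sigma,\Omega)$ and then to deduce $v=c\,u_p$ from the simplicity of the first eigenvalue. The lower bound is immediate: inserting $\varphi=v$ into the weak formulation \eqref{defsol} satisfied by $v$ (with $\eta$ in place of $\ell_1(\sigma,\Omega)$) is legitimate, since $v\in W^{1,p}(\Omega)$ and $\sigma v^{p}\in L^{1}(\de\Omega)$, and yields
\[
\int_\Omega|\nabla v|^{p}\,dx+\int_{\de\Omega}\sigma\,v^{p}\,d\mathcal H^{n-1}=\eta\int_\Omega v^{p}\,dx,
\]
that is $\Q[\sigma,v]=\eta$. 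Since $v$ is admissible in \eqref{mainvar}, the definition of $\ell_1(\sigma,\Omega)$ forces $\ell_1(\sigma,\Omega)\le\eta$.

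For the reverse inequality I would compare $v$ with the positive first eigenfunction $u_p$ of Proposition \ref{lemma1} by means of Picone's identity: for $a\ge0$ and $b>0$ one has, pointwise,
\[
|\nabla a|^{p}\ \ge\ |\nabla b|^{p-2}\,\nabla b\cdot\nabla\!\Big(\frac{a^{p}}{b^{p-1}}\Big),
\]
with equality a.e.\ exactly when $a/b$ is locally constant. Applying this with $a=u_p$ and $b=v$, integrating over $\Omega$, and then using $\varphi=u_p^{p}/v^{p-1}$ as a test function in \eqref{defsol} to rewrite the right-hand side through the equation solved by $v$, I expect to arrive at
\[
\int_\Omega|\nabla u_p|^{p}\,dx\ \ge\ \eta\int_\Omega u_p^{p}\,dx-\int_{\de\Omega}\sigma\,u_p^{p}\,d\mathcal H^{n-1}.
\]
Since $u_p$ realizes the infimum in \eqref{mainvar}, the left-hand side plus $\int_{\de\Omega}\sigma u_p^{p}\,d\mathcal H^{n-1}$ equals $\ell_1(\sigma,\Omega)\int_\Omega u_p^{p}\,dx$, so that $\eta\int_\Omega u_p^{p}\,dx\le\ell_1(\sigma,\Omega)\int_\Omega u_p^{p}\,dx$ and hence $\eta\le\ell_1(\sigma,\Omega)$.

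Combining the two steps gives $\eta=\ell_1(\sigma,\Omega)$, and consequently $\Q[\sigma,v]=\ell_1(\sigma,\Omega)$, i.e.\ $v$ is itself a minimizer of \eqref{mainvar}. The simplicity of $\ell_1(\sigma,\Omega)$ established in Proposition \ref{lemma1} then yields $v=c\,u_p$ for some constant $c$, which is the assertion. The delicate point is the admissibility of the test function $u_p^{p}/v^{p-1}$, since $v$ is a priori only known to be positive in the interior and may vanish on $\de\Omega$. To deal with this I would first use the Harnack inequality to ensure $v>0$ in $\Omega$, and then regularize, replacing $v$ by $v+\eps$ in the denominator: the function $u_p^{p}/(v+\eps)^{p-1}$ lies in $W^{1,p}(\Omega)\cap L^{\infty}(\de\Omega)$ and is an admissible test function, while $\nabla(v+\eps)=\nabla v$ keeps the Picone inequality valid. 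Using the boundedness of $u_p$ and dominated convergence, I would pass to the limit $\eps\to0^{+}$, noting that the boundary contribution converges to $\int_{\de\Omega}\sigma u_p^{p}\,\mathbf{1}_{\{v>0\}}\,d\mathcal H^{n-1}\le\int_{\de\Omega}\sigma u_p^{p}\,d\mathcal H^{n-1}$, which is precisely the direction needed to preserve the inequality $\eta\le\ell_1(\sigma,\Omega)$.
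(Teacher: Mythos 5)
Your proposal is correct and coincides with the paper's approach: the paper's own proof is just a citation of \cite[Theorem 3.3]{pota}, and the argument there is precisely this Picone-inequality comparison of the nonnegative eigenfunction $v$ with the positive first eigenfunction $u_p$, including the regularized test function $u_p^p/(v+\eps)^{p-1}$ and the passage $\eps\to 0^+$. The only points you leave implicit are standard: inserting $\varphi=v$ into \eqref{defsol} strictly requires truncation (test functions must lie in $L^\infty(\de\Omega)$, so one tests with $\min(v,k)$ and lets $k\to\infty$ by monotone convergence), and the boundedness of $u_p$ needed for the admissibility of $u_p^p/(v+\eps)^{p-1}$ follows from Moser iteration (or directly from the embedding \eqref{trace} when $p>n$).
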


\begin{proof}
The proof follows line by line the argument given in \cite[Theorem 3.3]{pota}. 
\end{proof}

\smallskip

%%%%%%%%%%%%%%%%%%
\subsection{Notation}
For a given $m> 0$, let us consider the set of functions 
\begin{equation}
\label{sigmam}
\Sigma_m(\de \Omega)=\left\{\sigma \in L^1(\de \Omega) \colon \sigma \ge 0, \, \int_{\de \Omega}\sigma \,d \mathcal{H}^{n-1}=m\right\}.
\end{equation}
Note that for every $\sigma \in \Sigma_{m}(\de\Omega)$ we have 
\begin{equation}
\label{uptriv}
 \ell_{1}(\sigma, \Omega) \le \min\left\{\Lambda_{1}^{D}(\Omega), \frac{m}{|\Omega|}\right\},
\end{equation}
where 
$$
\Lambda_{1}^{D}(\Omega) = \inf_{u\in W_0^{1,p}(\Omega)}  \dfrac{\ds \int_\Omega |\nabla u|^p dx}{\ds \int_\Omega |u|^p dx}
$$
denotes the first Dirichlet eigenvalue of $-\Delta_p $ in $\Omega$. Upper bound \eqref{uptriv} follows  by choosing as a test function in \eqref{mainvar} the first Dirichlet eigenfuntion of $-\Delta_p $ respectively a constant function. In view of \eqref{uptriv} we can thus define the quatities
\begin{align}
\Lambda(m,\Omega)& = \sup_{\sigma\in\Sigma_{m}(\de\Omega)} \ell_{1}(\sigma,\Omega), \label{sup} \\
& \nonumber \\
\lambda(m,\Omega)& = \inf_{\sigma\in\Sigma_{m}(\de\Omega)} \ell_{1}(\sigma,\Omega). \label{inf}
\end{align}
which are the main objects of our interest.

%%%%%%%%%%%%%%%%%%%%%%%%%%%%%%%%%%%%%%%%%%%%%%%%%%%
\section{\bf Optimization of $\ell_{1}(\sigma,\Omega)$ with respect to $\sigma$: the supremum}
\label{sec-sup}
The purpose of this section is to analyze the optimization problem \eqref{sup}. We start by showing that it is sufficient to study the supremum of $\ell_{1}(\sigma,\Omega)$  among the functions $\sigma \in \Sigma_m(\de \Omega)$ such that the corresponding minimiser $\hat{u}$ of $\mathcal{Q}[\sigma, u]$ is constant on the boundary of $\Omega$. 

\begin{prop}\label{ossmax}
Let $p>1$, $m >0$, $\hat\sigma\in \Sigma_{m}(\de\Omega)$. If $\hat u\in W^{1,p}(\Omega)$ is a function such that $\ell_{1}(\hat\sigma,\Omega)=\mathcal Q[\hat\sigma,\hat u]$ and $\hat u$ is constant on $\de \Omega$, then 
\[
\Lambda(m,\Omega)= \ell_1(\hat{\sigma},\Omega).
\]
\end{prop}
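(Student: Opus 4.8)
The plan is to prove the nontrivial inequality $\Lambda(m,\Omega)\le \ell_1(\hat\sigma,\Omega)$, since the reverse inequality $\Lambda(m,\Omega)\ge \ell_1(\hat\sigma,\Omega)$ is immediate from the definition \eqref{sup} of the supremum together with $\hat\sigma\in\Sigma_m(\de\Omega)$. The idea is to use the single function $\hat u$ as a \emph{common} competitor in the Rayleigh quotient $\Q[\sigma,\cdot]$ for every admissible density $\sigma$.

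Fix an arbitrary $\sigma\in\Sigma_m(\de\Omega)$. Since $\hat u$ is a minimizer it is not identically zero, so by the variational characterization \eqref{mainvar} we may test with $\hat u$ to obtain
\[
\ell_1(\sigma,\Omega)\ \le\ \Q[\sigma,\hat u]\ =\ \frac{\ds\int_\Omega |\nabla \hat u|^p\,dx+\ds\int_{\de\Omega}\sigma\,|\hat u|^p\,d\mathcal H^{n-1}}{\ds\int_\Omega |\hat u|^p\,dx}.
\]
The crucial point is the hypothesis that $\hat u$ is constant on $\de\Omega$, say $\hat u=c$ in the trace sense; hence $|\hat u|^p=|c|^p$ is constant on $\de\Omega$ and the boundary integral collapses to
\[
\int_{\de\Omega}\sigma\,|\hat u|^p\,d\mathcal H^{n-1}=|c|^p\int_{\de\Omega}\sigma\,d\mathcal H^{n-1}=|c|^p\,m,
\]
where the last equality uses the mass constraint built into $\Sigma_m(\de\Omega)$, see \eqref{sigmam}. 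In particular this value is independent of $\sigma$, so it coincides with the corresponding boundary integral computed with $\hat\sigma$, and therefore $\Q[\sigma,\hat u]=\Q[\hat\sigma,\hat u]$ for every $\sigma\in\Sigma_m(\de\Omega)$.

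Combining these facts with the hypothesis $\ell_1(\hat\sigma,\Omega)=\Q[\hat\sigma,\hat u]$ gives, for every $\sigma\in\Sigma_m(\de\Omega)$,
\[
\ell_1(\sigma,\Omega)\ \le\ \Q[\sigma,\hat u]\ =\ \Q[\hat\sigma,\hat u]\ =\ \ell_1(\hat\sigma,\Omega),
\]
and taking the supremum over $\sigma$ yields $\Lambda(m,\Omega)\le \ell_1(\hat\sigma,\Omega)$, which together with the reverse inequality proves the claim. I expect no genuine analytic obstacle: the whole argument rests on the single observation that constancy of $\hat u$ on $\de\Omega$ decouples the boundary term from the \emph{distribution} of $\sigma$, leaving only its total mass $m$, after which the result is a direct consequence of the comparison property of the Rayleigh quotient. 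The only minor point worth noting is that $\Q[\sigma,\hat u]$ is finite, which holds because $\sigma\in L^1(\de\Omega)$ and $|\hat u|^p$ is constant, hence bounded, on $\de\Omega$.
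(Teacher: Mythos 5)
Your proof is correct and follows essentially the same route as the paper: both use $\hat u$ as a common test function in $\Q[\sigma,\cdot]$ for every $\sigma\in\Sigma_m(\de\Omega)$, observe that constancy of $\hat u$ on $\de\Omega$ reduces the boundary term to (a multiple of) the total mass $m$ so that $\Q[\sigma,\hat u]=\Q[\hat\sigma,\hat u]=\ell_1(\hat\sigma,\Omega)$, and then take the supremum, with the reverse inequality immediate since $\hat\sigma\in\Sigma_m(\de\Omega)$. Your added remark on the finiteness of $\Q[\sigma,\hat u]$ is a harmless refinement, not a different argument.
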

\begin{proof}Let us suppose that $\hat u$ is constant on $\de\Omega$. Then, for any $\sigma \in \Sigma_m(\de \Omega)$ we have:
\begin{gather*}
\begin{split}
\ell_{1}(\sigma,\Omega ) = & \min_{\substack{u\in W^{1,p}(\Omega)\\ u\neq 0}} \Q[\sigma,u] \le \Q[\sigma,\hat{u}] =\dfrac{\ds \int_\Omega |\nabla  \hat{u}|^p dx + \ds \int_{\de \Omega}\sigma(x)\hat{u}^p d\mathcal H^{n-1}}{\ds \int_\Omega \hat{u}^p dx}\\ 
= & \dfrac{\ds \int_\Omega |\nabla  \hat{u}|^p dx + \ds m\left.\hat{u}\right|_{\de \Omega}^{p} }{\ds \int_\Omega \hat{u}^p dx}= \Q[\hat{\sigma},\hat{u}] =  \ell_1(\hat{\sigma},\Omega).
\end{split}
\end{gather*}   
Hence
\[
\Lambda(m,\Omega) \le \ell_{1}(\hat \sigma,\Omega).
\]
Being $\hat \sigma\in \Sigma_{m}(\de\Omega)$, the above inequality is an equality and the proof is completed.
\end{proof}

\noindent In order to prove the existence and uniqueness of the maximising $\sigma$ we consider, for any fixed $\xi\in]0,\Lambda_{1}^{D}(\Omega)[$,  the following problem:
 \begin{equation}
 \label{aux}
  \left\{
    \begin{array}{rl}
    -\Delta_{p}v &=\left(\xi^{\frac{1}{p-1}}v+1\right)^{p-1}
      \quad \text{in }  \
      \Omega,\\[.2cm] 
	v &=0 \qquad\qquad\ \qquad \quad \text{on }\  \de\Omega.
    \end{array}
  \right.
\end{equation}
By \cite[Thms.~1\, \& 2]{diazsaa}, the condition $\xi< \Lambda_{1}^{D}(\Omega)$ guarantees that there exists a unique nonnegative solution $u_{\xi}$ of \eqref{aux}. The boundary regularity theory, see \cite[Thm.~1]{lieber}, shows that $u_{\xi}\in C^{1,\beta}(\overline{\Omega})$ for some $\beta>0$. 
Moreover, since $ \Delta_p u_{\xi} < 0$, by \cite[Thm.~5]{vazquez} we then conclude that $u_{\xi}$ is positive in $\Omega$ and 
$\frac {\de u_{\xi}}{\de\nu} < 0$ on $\de\Omega$. Hence
\begin{equation} \label{L-infty}
  \frac{u_{\xi_1}}{u_{\xi_2}} \in L^\infty(\Omega) \qquad \forall \ \xi_1, \xi_2 \in \ ]0,\Lambda_{1}^{D}(\Omega)[\, .
\end{equation}
Now let us define the function $F:\,  [0,\Lambda_{1}^{D}(\Omega)[\ \to \  [0,+\infty[$ by
\[
F(\xi)= \xi \int_{\Omega}\left(\xi^{\frac{1}{p-1}} u_{\xi}+1\right)^{p-1}dx.
\]

\smallskip

\begin{lemma}
\label{F}
The function $F$ is strictly increasing, and $F(\xi)\rightarrow +\infty$ as $\xi\rightarrow \Lambda_{1}^{D}(\Omega)$. 
\end{lemma}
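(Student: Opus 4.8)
My plan is to reformulate everything in terms of $w_\xi := \xi^{1/(p-1)} u_\xi$, which satisfies $-\Delta_p w_\xi = \xi (w_\xi+1)^{p-1}$ in $\Omega$, $w_\xi = 0$ on $\partial\Omega$, $w_\xi>0$ in $\Omega$, and for which
$F(\xi) = \xi\int_\Omega (w_\xi+1)^{p-1}\,dx$. This substitution removes the explicit $\xi^{1/(p-1)}$ from the integrand and isolates a single multiplicative factor $\xi$, which will make the strict monotonicity transparent. I then treat the two assertions separately: strict monotonicity, and the blow-up as $\xi\to\Lambda_{1}^{D}(\Omega)$.

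For monotonicity, I would first show that $\xi\mapsto w_\xi$ is nondecreasing, i.e. $w_{\xi_1}\le w_{\xi_2}$ whenever $\xi_1<\xi_2$. Since $w_{\xi_1}\ge 0$, one has $-\Delta_p w_{\xi_1}=\xi_1(w_{\xi_1}+1)^{p-1}\le \xi_2(w_{\xi_1}+1)^{p-1}$, so $w_{\xi_1}$ is a nonnegative subsolution of the equation solved by $w_{\xi_2}$, and both vanish on $\partial\Omega$. Because the nonlinearity $s\mapsto \xi_2(s+1)^{p-1}$ is sublinear in the sense that $s\mapsto \xi_2(s+1)^{p-1}/s^{p-1}$ is strictly decreasing, the weak comparison principle for such equations (the same Diaz--Saa/Picone mechanism that yields uniqueness in \cite{diazsaa}) gives $w_{\xi_1}\le w_{\xi_2}$. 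Granting this, strict monotonicity of $F$ is immediate: $F(\xi_2)=\xi_2\int_\Omega(w_{\xi_2}+1)^{p-1}\ge \xi_2\int_\Omega(w_{\xi_1}+1)^{p-1}>\xi_1\int_\Omega(w_{\xi_1}+1)^{p-1}=F(\xi_1)$, where the last inequality is strict because $\xi_2>\xi_1$ and the integral is bounded below by $|\Omega|>0$.

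For the blow-up I would use Picone's inequality with the first Dirichlet eigenfunction $\phi_1$ (normalized so that $-\Delta_p\phi_1=\Lambda_{1}^{D}(\Omega)\phi_1^{p-1}$, $\phi_1>0$ in $\Omega$, whence $\int_\Omega|\nabla\phi_1|^p\,dx=\Lambda_{1}^{D}(\Omega)\int_\Omega\phi_1^p\,dx$) as the nonnegative function and $w_\xi$ as the positive one; admissibility of $\phi_1^p/w_\xi^{p-1}$ as a test function follows from the $C^{1,\beta}$-regularity and the sign $\partial w_\xi/\partial\nu<0$ already recorded. This yields $\Lambda_{1}^{D}(\Omega)\int_\Omega\phi_1^p\,dx\ge \xi\int_\Omega (1+w_\xi^{-1})^{p-1}\phi_1^p\,dx$. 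Fixing $R>0$ and splitting according to $\{w_\xi\le R\}$, the strict inequality $(1+R^{-1})^{p-1}>1$ forces $\int_{\{w_\xi\le R\}}\phi_1^p\,dx\to 0$ as $\xi\to\Lambda_{1}^{D}(\Omega)$; since $\phi_1\in L^\infty$, this gives a lower bound $\liminf|\{w_\xi>R\}|\ge \kappa>0$ with $\kappa$ independent of $R$. Then $\int_\Omega(w_\xi+1)^{p-1}\,dx\ge R^{p-1}|\{w_\xi>R\}|$, so letting first $\xi\to\Lambda_{1}^{D}(\Omega)$ and then $R\to\infty$ gives $F(\xi)\to+\infty$ (the already established monotonicity guarantees the limit exists).

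The main obstacle is the comparison step underlying monotonicity: the nonlinearity is nonhomogeneous, as it does not vanish at $s=0$, so one cannot simply quote the plain weak comparison principle for $-\Delta_p$ and must instead exploit the sublinear structure. I would make this rigorous through the Diaz--Saa inequality, testing the sub- and supersolution inequalities with $(w_{\xi_1}^p-w_{\xi_2}^p)^+/w_{\xi_1}^{p-1}$ and $(w_{\xi_1}^p-w_{\xi_2}^p)^+/w_{\xi_2}^{p-1}$ on $\{w_{\xi_1}>w_{\xi_2}\}$, and using the strict monotonicity of the ratio $s\mapsto \xi_2(s+1)^{p-1}/s^{p-1}$ to conclude that this set is null. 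The secondary technical point, common to both parts, is justifying these singular test functions near $\partial\Omega$, which is precisely where the boundary regularity and the sign of the normal derivative enter.
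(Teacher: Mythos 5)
Your proposal is correct, and it splits into one half that mirrors the paper and one half that takes a genuinely different route. The monotonicity argument is, up to the substitution $w_\xi=\xi^{1/(p-1)}u_\xi$, exactly the paper's Claim~1: the same Diaz--Saa test functions $(w_{\xi_1}^p-w_{\xi_2}^p)^+/w_{\xi_i}^{p-1}$, the same hidden-convexity inequality (the paper reduces it to $t^p-pt+p-1\ge 0$), and the same justification of admissibility via $C^{1,\beta}$-regularity and the Hopf sign $\partial u_\xi/\partial\nu<0$; your change of variables is cosmetic, though it does make the strictness $F(\xi_1)<F(\xi_2)$ cleaner by isolating the factor $\xi$ (the paper gets strictness from the combined increase of the prefactor and the integrand). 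The blow-up is where you diverge. The paper argues in two steps: first (Claim~2) a contradiction argument showing $\|u_\xi\|_\infty\to\infty$ --- if not, a sub/supersolution construction would yield a solution of \eqref{aux} for some $\xi>\Lambda_{1}^{D}(\Omega)$, contradicting the nonexistence statement of \cite[Thm.~2]{diazsaa}; then (Claim~3) uniform $C^{0,\alpha}$ estimates to show $u_\xi/\|u_\xi\|_\infty$ converges to the first Dirichlet eigenfunction $v$, whence $F(\xi)\sim\Lambda_{1}^{D}(\Omega)\|u_\xi\|_\infty^{p-1}\int_\Omega v^{p-1}\,dx\to\infty$. You instead test the equation for $w_\xi$ with $\phi_1^p/w_\xi^{p-1}$ and use Picone's inequality to obtain
\begin{equation*}
\left(\Lambda_{1}^{D}(\Omega)-\xi\right)\int_\Omega\phi_1^p\,dx \ \ge\ \xi\left(\left(1+R^{-1}\right)^{p-1}-1\right)\int_{\{w_\xi\le R\}}\phi_1^p\,dx ,
\end{equation*}
which forces $\int_{\{w_\xi\le R\}}\phi_1^p\,dx\to 0$ for each fixed $R$, hence $|\{w_\xi>R\}|\ge\kappa>0$ with $\kappa=\|\phi_1\|_\infty^{-p}\int_\Omega\phi_1^p\,dx$ independent of $R$, and then $F(\xi)\ge\xi R^{p-1}|\{w_\xi>R\}|$ gives divergence; all the steps check out, and the admissibility of the singular test function follows from the same Hopf-type bounds already recorded in the paper (applied also to $\phi_1$). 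Your route is more quantitative and more self-contained: it avoids both the nonexistence part of the Diaz--Saa theorem and the uniform H\"older estimates of Lieberman/Ladyzhenskaya--Ural'tseva needed for the normalized limit. What the paper's longer argument buys in exchange is asymptotic information your proof does not provide: the limiting profile $u_\xi/\|u_\xi\|_\infty\to v$ and the precise growth rate of $F$ in terms of $\|u_\xi\|_\infty^{p-1}$.
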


\begin{proof} 
To simplify the notation, we write $\Lambda_{1}^{D}(\Omega)=\Lambda_{1}^{D}$. We split the proof in three steps.  

\medskip

{\bf Claim 1.} If $0\le \xi_{1}<\xi_{2}<\Lambda_{1}^{D}$, then  $u_{\xi_{1}}\le u_{\xi_{2}}$ in $\Omega$. We employ a variation of the argument used in \cite{diazsaa}, see also \cite{bk}. Let us define  
\[
	\varphi_{1}= \frac{(u_{\xi_{1}}^{p}-u_{\xi_{2}}^{p})^{+}}{u_{\xi_{1}}^{p-1}}\, , \qquad 
	\varphi_{2}= \frac{(u_{\xi_{1}}^{p}-u_{\xi_{2}}^{p})^{+}}{u_{\xi_{2}}^{p-1}} \, ,
\]
and
$$
\Omega_+ = \left\{ x\in \Omega\ : \ u_{\xi_1} > u_{\xi_2} \right\} . %,\quad \Omega_- = \Omega\setminus\Omega_+\, .
$$
In view of \eqref{L-infty} it is easy to see that the functions $\varphi_{1}$ and $\varphi_{2}$ are in $W_{0}^{1,p}(\Omega)$. Hence we may use $\varphi_{i}$ as test function for problem \eqref{aux} when $\xi=\xi_{i}$, $i=1,2$. By subtracting and integrating by parts we get
\begin{multline}
\int_{\Omega} \left(\frac{-\Delta_{p}u_{\xi_{1}}}{u_{\xi_{1}}^{p-1}}
+\frac{\Delta_{p}u_{\xi_{2}}}{u_{\xi_{2}}^{p-1}} \right) (u_{\xi_{1}}^{p}-u_{\xi_{2}}^{p})^{+}dx  = \\ 
= \int_{\Omega_+} \left[ |\nabla u_{\xi_1}|^p + |\nabla u_{\xi_2}|^p +(p-1) |\nabla u_{\xi_1}|^p \left(\frac{u_{\xi_2}}{u_{\xi_1}}\right)^p + (p-1) |\nabla u_{\xi_2}|^p \left(\frac{u_{\xi_1}}{u_{\xi_2}}\right)^p + \right. \\
 \left. -p\, \nabla u_{\xi_1}\cdot \nabla u_{\xi_2} \left(  \left(\frac{u_{\xi_2}}{u_{\xi_1}}\right)^{p-1} |\nabla u_{\xi_1}|^{p-2} + \left(\frac{u_{\xi_1}}{u_{\xi_2}}\right)^{p-1} |\nabla u_{\xi_2}|^{p-2}\right) \right]\, dx . \label{aux-2}
\end{multline}
We will show that the integrand on the right hand side of \eqref{aux-2} is nonnegative.  To this end consider the mapping $t \mapsto t^p-pt+p-1$ defined on $[0,\infty[$. By minimising with respect to $t$ we find that $t^p-pt+p-1 \geq 0$ for all $t\geq 0$. Therefore 
\[
\left(\frac{u_{\xi_1}}{u_{\xi_2}}\right)^p\, \frac{|\nabla u_{\xi_2}|^p}{|\nabla u_{\xi_1}|^p} +p-1\ \geq\ p\, \left(\frac{u_{\xi_1}}{u_{\xi_2}}\right)\, \frac{|\nabla u_{\xi_2}|}{|\nabla u_{\xi_1}|},
\]
which implies that 
\[
|\nabla u_{\xi_2}|^p + (p-1) |\nabla u_{\xi_1}|^p\,  \left(\frac{u_{\xi_2}}{u_{\xi_1}}\right)^p\ \geq\ p\, |\nabla u_{\xi_1}|^{p-1} |\nabla u_{\xi_2}| \left(\frac{u_{\xi_2}}{u_{\xi_1}}\right)^{p-1}
\]
provided $\nabla u_{\xi_1} \neq 0$. In the same way it follows that 
\[
|\nabla u_{\xi_1}|^p + (p-1) |\nabla u_{\xi_2}|^p\,  \left(\frac{u_{\xi_1}}{u_{\xi_2}}\right)^p\ \geq\ p\, |\nabla u_{\xi_2}|^{p-1} |\nabla u_{\xi_1}| \left(\frac{u_{\xi_1}}{u_{\xi_2}}\right)^{p-1}
\]
whenever $\nabla u_{\xi_2} \neq 0$. Since the positivity of \eqref{aux-2} is trivial on the set where $\nabla u_{\xi_1} \cdot\nabla u_{\xi_2} =0$, we conclude with
\begin{equation} \label{geq0}
\int_{\Omega} \left(\frac{-\Delta_{p}u_{\xi_{1}}}{u_{\xi_{1}}^{p-1}}
+\frac{\Delta_{p}u_{\xi_{2}}}{u_{\xi_{2}}^{p-1}} \right) (u_{\xi_{1}}^{p}-u_{\xi_{2}}^{p})^{+}dx \ \geq \ 0.
\end{equation}
On the other hand, by \eqref{aux} 
\begin{multline*}
\int_{\Omega} \left(\frac{\Delta_{p}u_{\xi_{2}}}{u_{\xi_{2}}^{p-1}} -\frac{\Delta_{p}u_{\xi_{1}}}{u_{\xi_{1}}^{p-1}}
\right) (u_{\xi_{1}}^{p}-u_{\xi_{2}}^{p})^{+}dx = \\= 
\int_{\Omega}\left[
\left(\xi_{1}^{\frac{1}{p-1}}+\frac{1}{u_{\xi_{1}}}\right)^{p-1}-
\left(\xi_{2}^{\frac{1}{p-1}}+\frac{1}{u_{\xi_{2}}}\right)^{p-1}
\right] (u_{\xi_{1}}^{p}-u_{\xi_{2}}^{p})^{+}dx \le 0.
\end{multline*}
This in combination with \eqref{geq0} shows that $u_{\xi_{1}}\le u_{\xi_{2}}$ in $\Omega$ and consequently $F(\xi_{1})<F(\xi_{2})$. 

\smallskip

\noindent In rest of the proof we show that $F(\xi)$ diverges as $\xi$ approaches $\Lambda_1^D$.

\medskip

{\bf Claim 2.} $\|u_{\xi}\|_{L^{\infty}(\Omega)} \to +\infty$ as $\xi \to \Lambda_{1}^D$. It follows from Claim 1 that the function $\xi \to \|u_{\xi}\|_{L^{\infty}(\Omega)} $ is nondecreasing on $]0, \Lambda_1^D[$. Hence we only have to show  that $\|u_{\xi}\|_{L^{\infty}(\Omega)}$ is unbounded. 
By contradiction, we suppose that  $\|u_{\xi}\|_{L^{\infty}(\Omega)}\le M $ for any $0\le \xi<\Lambda_{1}^{D}$ and some $M$. Then $u_{\xi}$ is uniformly bounded in $W_{0}^{1,p}(\Omega)$ and then it converges weakly to a function $\psi \in W_{0}^{1,p}(\Omega)\cap L^{\infty}(\Omega)$ which is a weak nonnegative solution of \eqref{aux} for $\xi=\Lambda_{1}^D$. Let us consider a constant $C>1$. We have:  
\begin{gather*}
\begin{split}
-\Delta_p(C\psi)=&-C^{p-1} \Delta_p \psi= C^{p-1}\left((\Lambda^D_1)^{\frac{1}{p-1}}\psi+1\right)^{p-1}=\left((\Lambda^D_1)^{\frac{1}{p-1}}C\psi+C\right)^{p-1}\\
&\ge \left[\left((\Lambda^D_1)^{\frac{1}{p-1}}+\delta_M(C)\right)C\psi+1\right]^{p-1}\text{ in }\mathcal D',
\end{split}
\end{gather*}
where $\delta_{M}(C)=\frac{C-1}{CM}$.
Hence $C\psi$ is a positive supersolution of \eqref{aux} for 
$$
\xi=\big((\Lambda^D_1)^{\frac{1}{p-1}}+\delta_M(C)\big) >(\Lambda^D_1)^{\frac{1}{p-1}}.
$$ 
Being $v=0$ a subsolution, then for such $\xi$ there exists a nonnegative weak solution $w\in W_{0}^{1,p}(\Omega)\cap L^{\infty}(\Omega)$. This contradicts the necessary condition for the existence of a solution of \eqref{aux} formlated in \cite[Thm.2]{diazsaa}, and Claim 2 is proved.

\medskip

{\bf Claim 3}. When $\xi\rightarrow \Lambda_{1}^{D}$, then
\begin{equation} \label{eq-enough}
\frac{u_{\xi}}{\|u_{\xi}\|_{\infty}} \ \rightarrow \ v  \in W_{0}^{1,p}(\Omega) \cap L^{\infty}(\Omega),
\end{equation}
where $v$ is the first positive Dirichlet eigenfunction of $\Delta_{p}$ such that $\| v\|_\infty=1$.  To prove the above claim, we point out that the function 
$$
v_{\xi}:=\frac{u_{\xi}}{\|u_{\xi}\|_{\infty}} 
$$ 
satisfies
 \begin{equation*}
  \left\{
    \begin{array}{rcll}
    -\Delta_{p}v_{\xi} &=& \left(\xi^{\frac{1}{p-1}}v_{\xi}+\frac{1}{\|u_{\xi}\|_{\infty}}\right)^{p-1}
      &\text{in } 
      \Omega,\\[.3cm] 
	v_{\xi}& =& 0 &\text{on } \de\Omega.
    \end{array}
  \right.
\end{equation*}
and $\max_{\Omega} v_{\xi}=1$, hence $v_{\xi}$ are bounded in $W_{0}^{1,p}(\Omega)$ and then weakly converge in $W_{0}^{1,p}(\Omega)$, as $\xi \to \Lambda_{1}^{D}$, to a function $v\ge 0$ such that
\begin{equation}
\label{limitv}
  \left\{
    \begin{array}{rcll}
    -\Delta_{p}v&=& \Lambda_{1}^{D}v^{p-1}
      &\text{in } \Omega,\\[.3cm] 
	v&=& 0 &\text{on } \de\Omega.
    \end{array}
  \right.
\end{equation}
Moreover, since $v_{\xi}$ are uniformly bounded in $L^{\infty}(\Omega)$, they are also uniformly bounded in $C^{0,\alpha}(\bar\Omega)$ for some $\alpha\in]0,1[$, see e.g~\cite[Thm.~1]{lieber} or \cite{ladyz}. Hence $v_{\xi}$ converges to $v$ uniformly. This ensures that $\max_{\Omega} v=1$, and then $v\not\equiv 0$. Actually, $v$ is a first Dirichlet eigenfunction of $\Delta_{p}$. This proves \eqref{eq-enough}  which in turn implies that
\begin{equation}
\lim_{\xi\rightarrow \lambda_{1}^{D}}\frac{F(\xi)}{\|u_{\xi}\|^{p-1}_{\infty}}= \Lambda_{1}^{D}\int_{\Omega} v^{p-1}\,dx>0. 
\end{equation}
Hence in view of Claim 2 we have $F(\xi)\rightarrow +\infty$ as $\xi\rightarrow \Lambda_{1}^{D}(\Omega)$ and the proof of the Lemma is complete.
\end{proof}

\noindent Lemma \ref{F} allows us to define the function $\xi: \, ]0,\infty[ \ \to\  ]0,\Lambda_1^D(\Omega)[$ by 
\[
\xi(m):= F^{-1}(m).
\]
For each $m>0$ there exists a unique function $u_{\xi(m)}$ which solves problem \eqref{aux} for $\xi=\xi(m)$. 

\smallskip

\noindent We are now in position to give an explicit formula for $\sigma$ which maximises $\ell_1(\sigma,\Omega)$. 

\begin{theo}
\label{thm3.1}
The supremum $\Lambda(m,\Omega)=\ds\sup_{\sigma\in\Sigma_{m}(\de\Omega)} \ell_{1}(\sigma,\Omega)$ is attained for any $m>0$, and satisfies
\[
\Lambda(m,\Omega)=\ell_{1}(\sigma_{m},\Omega)= \xi(m),
\]
where 
\[
\sigma_{m}=- \xi(m)\, |\nabla u_{\xi(m)}|^{p-2}\ \frac{\de u_{\xi(m)}}{\de \nu}\, ,
\]
and $u_{\xi(m)}$ is the unique solution of \eqref{aux} with $\xi=\xi(m)$. Moreover, the maximiser $\sigma_{m}$ is unique.
\end{theo}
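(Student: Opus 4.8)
The plan is to produce the maximiser explicitly out of the auxiliary problem \eqref{aux} by means of the affine change of unknown
\[
w := \xi(m)^{\frac{1}{p-1}}\, u_{\xi(m)} + 1,
\]
and then to identify $w$ as the first Robin eigenfunction associated with a boundary datum that is forced to be $\sigma_m$. Writing $\xi=\xi(m)$ for brevity, the first step is to exploit that adding a constant does not change the gradient: $\nabla w = \xi^{1/(p-1)}\nabla u_\xi$, whence $|\nabla w|^{p-2}\nabla w = \xi\,|\nabla u_\xi|^{p-2}\nabla u_\xi$ and, inserting this into \eqref{aux}, $-\Delta_p w = \xi\, w^{p-1}$ in $\Omega$. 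Since $u_\xi=0$ on $\de\Omega$, the function $w$ is constant equal to $1$ on $\de\Omega$ and strictly positive on $\ol\Omega$. Taking normal components, $|\nabla w|^{p-2}\frac{\de w}{\de\nu} = \xi\,|\nabla u_\xi|^{p-2}\frac{\de u_\xi}{\de\nu} = -\sigma_m$ on $\de\Omega$, so that, because $w^{p-1}=1$ there, $w$ obeys the Robin boundary condition of \eqref{pbdue} with datum $\sigma_m$. Using $u_\xi\in C^{1,\beta}(\ol\Omega)$ to integrate by parts against an arbitrary $\varphi\in W^{1,p}(\Omega)\cap L^\infty(\de\Omega)$, I would then check that $w$ is in fact a \emph{weak} solution of \eqref{pbdue} in the sense of \eqref{defsol}, with $\eta=\xi$ and $\sigma=\sigma_m$.

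Next I would verify $\sigma_m\in\Sigma_m(\de\Omega)$. Nonnegativity is immediate from $\frac{\de u_\xi}{\de\nu}<0$ on $\de\Omega$ (established before the lemma via \cite{vazquez}), which gives $\sigma_m>0$. For the mass constraint, integrating $-\Delta_p u_\xi=(\xi^{1/(p-1)}u_\xi+1)^{p-1}$ over $\Omega$ and applying the divergence theorem yields $\int_{\de\Omega}\sigma_m\,d\mathcal H^{n-1}=\xi\int_\Omega(\xi^{1/(p-1)}u_\xi+1)^{p-1}\,dx=F(\xi)$, which equals $m$ since $\xi=\xi(m)=F^{-1}(m)$ by Lemma \ref{F}. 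With $\sigma_m\in\Sigma_m(\de\Omega)$ in hand, Proposition \ref{pota} applies to the nonnegative, nontrivial weak solution $w$: it forces $\xi(m)=\ell_1(\sigma_m,\Omega)$ and identifies $w$ (up to a constant) with the first eigenfunction. Finally, since $w\equiv 1$ is constant on $\de\Omega$, Proposition \ref{ossmax} yields at once $\Lambda(m,\Omega)=\ell_1(\sigma_m,\Omega)=\xi(m)$, proving attainment together with the stated value.

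For uniqueness, suppose $\tilde\sigma\in\Sigma_m(\de\Omega)$ is any maximiser and let $\tilde u$ be its positive first eigenfunction. The idea is to test $\mathcal Q[\tilde\sigma,\cdot]$ with the explicit competitor $w$: since $w\equiv 1$ on $\de\Omega$ and $\int_{\de\Omega}\tilde\sigma\,d\mathcal H^{n-1}=m$,
\[
\mathcal Q[\tilde\sigma,w]=\frac{\int_\Omega|\nabla w|^p\,dx+m}{\int_\Omega w^p\,dx}=\mathcal Q[\sigma_m,w]=\ell_1(\sigma_m,\Omega)=\xi(m)=\ell_1(\tilde\sigma,\Omega),
\]
the last equality holding because $\tilde\sigma$ is a maximiser. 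Hence $w$ attains the infimum defining $\ell_1(\tilde\sigma,\Omega)$, i.e. $w$ is also a first eigenfunction for $\tilde\sigma$. By the simplicity of the first eigenvalue (Proposition \ref{lemma1}), $w=c\,\tilde u$ for some $c>0$, so $w$ solves the Robin problem for \emph{both} $\sigma_m$ and $\tilde\sigma$; comparing the two boundary conditions and using $w^{p-1}=1\neq 0$ on $\de\Omega$ forces $\tilde\sigma=\sigma_m$.

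The part I expect to require the most care is the first one: justifying rigorously, in the weak sense \eqref{defsol}, that the transformed function $w$ solves the Robin eigenvalue problem, since this is what makes Propositions \ref{pota} and \ref{ossmax} applicable. Here the $C^{1,\beta}(\ol\Omega)$ regularity of $u_\xi$ and the strict sign of its normal derivative are essential, both for the integration by parts and for guaranteeing that $\sigma_m$ is nonnegative with the correct total mass.
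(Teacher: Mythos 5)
Your proposal is correct and follows essentially the same route as the paper: the same transformed function $w=\xi(m)^{1/(p-1)}u_{\xi(m)}+1$ (the paper's $u_m$), the same divergence-theorem computation showing $\sigma_m\in\Sigma_m(\de\Omega)$, the same identification of $w$ as a positive Robin eigenfunction followed by Propositions \ref{pota} and \ref{ossmax}, and the same uniqueness argument via testing $\Q[\tilde\sigma,\cdot]$ with $w$ and comparing the two boundary conditions. The only cosmetic difference is your intermediate appeal to simplicity (Proposition \ref{lemma1}) in the uniqueness step, which the paper bypasses since a minimizer is automatically a weak solution of the Euler--Lagrange equation.
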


\begin{proof}
We first prove that $\sigma_m \in \Sigma_m(\de \Omega)$. Indeed  by the divergence theorem contained in \cite{anz} and by the definitions of $ \sigma_m, F$ and $\xi(m)$ , we have
\begin{gather*}
\begin{split}
\int_{\de \Omega} \sigma_m \,d \mathcal{H}^{n-1}=& - \xi(m)\int_{\de \Omega}  |\nabla u_{\xi(m)}|^{p-2} \frac{\de u_{\xi(m)}}{\de \nu} \,d \mathcal{H}^{n-1}=  \xi(m)\int_{ \Omega}- \Delta_pu_{\xi(m)} \, dx \\ &= \xi(m)\int_{ \Omega} \left(\xi^{\frac{1}{p-1}}u_{\xi(m)}+1\right)^{p-1} \, dx=F(\xi(m))=m.
\end{split}
\end{gather*}
We claim that $u_m=\xi(m)^{\frac{1}{p-1}}u_{\xi(m)}+1$ is a solution to the problem \eqref{robinp} with $\sigma=\sigma_{m}$. Indeed
\begin{gather*}
\begin{split}
- \Delta_pu_m=-\xi(m) \Delta_pu_{\xi(m)}=\xi(m) \left(\xi^{\frac{1}{p-1}}u_{\xi(m)}+1\right)^{p-1}=\xi(m) u_m^{p-1}.
\end{split}
\end{gather*}
As regards the boundary condition, we have
\begin{gather*}
\begin{split}
|\nabla u_m|^{p-2}\, \frac{\de u_m}{\de \nu}=\xi(m)|\nabla u_{\xi(m)}|^{p-2}\, \frac{\de u_{\xi(m)}}{\de \nu}=-\sigma_m.
\end{split}
\end{gather*}
Since $u_m=1$ on $\partial\Omega$, we have shown that $u_m$ is a solution to the problem \eqref{robinp} with $\sigma=\sigma_{m}$. Moreover, by Proposition \ref{ossmax} it follows that $\ell_{1}(\sigma_{m},\Omega)=\Lambda(m,\Omega)$. On the other hand, being $u_{m}>0$ in $\Omega$, Proposition \ref{pota} implies that $\ell_{1}(\sigma_{m},\Omega)=\xi(m)$. 

\smallskip

\noindent To conclude the proof it remains to show the uniqueness of $\sigma_{m}$.  Let $\bar \sigma\in \Sigma_{m}(\de\Omega)$ another maximiser. Reasoning as in Proposition \ref{ossmax}, and recalling that $u_{m}=1$ on $\de\Omega$, then 
\[
\ell_{1}(\sigma_{m},\Omega)= 
\ell_{1}(\bar\sigma,\Omega)\le \mathcal Q[\bar\sigma,u_{m}]=
\mathcal Q[\sigma_{m},u_{m}]=\ell_{1}(\sigma_{m},\Omega).
\]
Then $u_{m}$ satisfies \eqref{robinp} with $\sigma=\bar\sigma$. Hence
$\bar\sigma= -|\nabla u_m|^{p-2}\frac{\de u_m}{\de \nu}=\sigma_{m}$ almost everywhere on $\de \Omega$.
\end{proof}

\noindent Let us notice that the problem of the maximizing $\sigma$ in the linear case $p=2$ was treated already in \cite{kov}. 

\begin{rem} \label{rem-ball}
If $\Omega$ is a ball, then the unique positive solution of \eqref{aux} is a radial function. Hence in this case Theorem \ref{thm3.1} implies that the maximizing $\sigma$ is constant;
$$
\sigma_m =\frac{m}{|\partial\Omega|}\, .
$$
\end{rem}

%%%%%%%%%%%%%%%%%%%%%%%%%%%%%%%%%%%%%%%%%%%%%%%%%%%%%%%%
\section{\bf Optimization of $\ell_{1}(\sigma,\Omega)$ with respect to $\sigma$: the infimum}
\label{sec-inf}
The aim of this section is to describe the behavior of the infimum of $\ell_{1}(\sigma,\Omega)$ when $\sigma\ge 0$ has a fixed $L^{1}-$norm. 
Our purpose consists in the analysis of the problem \eqref{inf} for a given $m>0$.
We will prove that $\lambda(m,\Omega)$ is never achieved, unless $n=1$. 
Moreover $\lambda(m,\Omega)$ is positive if and only if $p>n$. 

\subsection{The case $p\le n$}
\begin{prop}
\label{inf0}
Let $1<p\le n$, $n\ge 2$, and $m>0$. Then
\[
\lambda(m,\Omega)=0
\]
and the infimum is not achieved.
\end{prop}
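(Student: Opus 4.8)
The plan is to show that one can make $\ell_1(\sigma,\Omega)$ arbitrarily small by concentrating the mass $m$ of $\sigma$ near a single point of $\partial\Omega$, while simultaneously choosing a well-chosen test function $u$ in the quotient $\mathcal{Q}[\sigma,u]$. The underlying idea is that when $p\le n$, the trace operator is ``weak enough'' (the critical trace exponent is finite or the capacity of a point vanishes) that a point on the boundary has zero $p$-capacity, so a function can be nearly constant away from that point yet small on a neighborhood of it, thereby killing the boundary integral at negligible cost in Dirichlet energy.

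Concretely, first I would fix a point $x_0 \in \partial\Omega$ and, for a small parameter $\eps>0$, let $\sigma_\eps \in \Sigma_m(\partial\Omega)$ be a function of total mass $m$ supported on $\partial\Omega \cap B_\eps(x_0)$ (for instance $\sigma_\eps = \frac{m}{\mathcal{H}^{n-1}(\partial\Omega\cap B_\eps(x_0))}$ on that set and $0$ elsewhere). Then I would construct a test function $u_\eps \in W^{1,p}(\Omega)$ that equals $1$ outside a slightly larger ball $B_{\eps^{1/2}}(x_0)$, vanishes on $B_\eps(x_0)$, and interpolates logarithmically (when $p=n$) or by a power law (when $p<n$) in the annular region, so that
\[
\int_\Omega |\nabla u_\eps|^p\,dx \ \longrightarrow\ 0 \qquad \text{as } \eps \to 0.
\]
This is precisely the standard construction exhibiting the vanishing $p$-capacity of a point in $\R^n$ when $p \le n$. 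Since $u_\eps \equiv 0$ on the support of $\sigma_\eps$, the boundary integral $\int_{\partial\Omega}\sigma_\eps |u_\eps|^p\,d\mathcal{H}^{n-1}$ vanishes identically. Meanwhile $\int_\Omega |u_\eps|^p\,dx \to |\Omega| > 0$. Plugging $u_\eps$ into the quotient yields
\[
\ell_1(\sigma_\eps,\Omega) \ \le\ \mathcal{Q}[\sigma_\eps,u_\eps] \ =\ \frac{\int_\Omega |\nabla u_\eps|^p\,dx}{\int_\Omega |u_\eps|^p\,dx}\ \longrightarrow\ 0,
\]
which, together with the nonnegativity $\ell_1 \ge 0$, forces $\lambda(m,\Omega) = \inf_\sigma \ell_1(\sigma,\Omega) = 0$.

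For the non-attainment claim, I would argue by contradiction: if some $\sigma_* \in \Sigma_m(\partial\Omega)$ achieved $\ell_1(\sigma_*,\Omega) = 0$, then by the argument already used in the proof of Proposition \ref{lemma1}, the associated minimizer would have to be a positive constant on $\bar\Omega$ with $\int_{\partial\Omega}\sigma_* \, d\mathcal{H}^{n-1} = 0$, contradicting $\int_{\partial\Omega}\sigma_*\,d\mathcal{H}^{n-1} = m > 0$. Hence the value $0$ cannot be attained by any admissible $\sigma$.

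The main obstacle, and the part deserving the most care, is the explicit capacity estimate showing $\int_\Omega |\nabla u_\eps|^p \to 0$; one must track the $n=p$ borderline case separately (where the logarithmic cutoff gives a Dirichlet energy of order $(\log(1/\eps))^{1-p} \to 0$) from the subcritical case $p<n$ (where a pure power cutoff suffices), and one must verify that the boundary regularity of $\Omega$ (here $C^{1,\eps}$) ensures that $\partial\Omega \cap B_\eps(x_0)$ behaves like a flat piece of $\R^{n-1}$, so that the trace and the local geometry do not spoil the capacity computation. The hypotheses $n \ge 2$ and $p \le n$ are exactly what make the point capacity vanish, so this is where those assumptions enter essentially.
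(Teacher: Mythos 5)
Your proposal is correct and takes essentially the same route as the paper: concentrate $\sigma$ on a shrinking boundary ball around a fixed $x_0\in\partial\Omega$, plug in a capacity-type cutoff test function (linear/power-law for $p<n$, logarithmic for $p=n$) whose Dirichlet energy vanishes, and deduce non-attainment from Proposition \ref{lemma1}. The only cosmetic difference is that your test function vanishes identically on the support of $\sigma_\eps$ (so the boundary term is exactly zero), whereas the paper's cutoff is merely small on that support; both variants yield $\mathcal{Q}[\sigma_j,u_j]\to 0$.
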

\begin{proof}
Let us denote by $B_{r}(x)$ the ball centered at $x$ with radius $r>0$. For $x_{0}\in \de\Omega$ fixed, and for any $j\in\mathbb N$ let 
\[
\sigma_{j}(x)=
\left\{
\begin{array}{ll}
\alpha_{j} &\text{if }x\in B_{2^{-j}}(x_{0})\cap \de \Omega,\\[.2cm]
0&\text{elsewhere},
\end{array}
\right.
\]
where $\alpha_{j}>0$ is a number such that $\|\sigma_{j}\|_{L^{1}(\de\Omega)}=m$. 

\smallskip

\noindent If $p<n$, let
\[
u_{j}(x)=\left\{
\begin{array}{ll}
j|x-x_{0}| &\text{in } B_{\frac1j}(x_{0})\cap\Omega,\\[.2cm]
1 &\text{in } \Omega\setminus B_{\frac1j}(x_{0}). 
\end{array}
\right.
\]
Then
\[
0\le \mathcal{Q}[\sigma_{j},u_{j}] \le\frac {j^{p}\left|B_{\frac1j}(x_{0})\right| + j^{p}\,2^{-jp}\,m}{|\Omega|}\to 0\text{ as }j\to \infty.
\]
If $p=n$, let
\[
u_{j}(x)=\left\{
\begin{array}{ll}
\dfrac{-\log j}{\log(|x-x_{0}|)} &\text{in } B_{\frac1j}(x_{0})\cap\Omega,\\[.4cm]
1 &\text{in } \Omega\setminus B_{\frac1j}(x_{0}). 
\end{array}
\right.
\]
As before, a direct computation shows that $\ds\lim_{j\rightarrow +\infty} \mathcal Q[\sigma_{j},u_{j}]=0$. 
Finally, Lemma \ref{lemma1} assures that the infimum is not attained.
\end{proof}

%%%%%%%%%%%%%%%%%%%%%%
\subsection{The case $p>n$} 

\subsubsection{\bf Positivity of the infimum} The substantial difference to the case  $p\leq n$ is that now $\lambda(m,\Omega)$ is positive.  

\begin{theo}
\label{thmp>n}
If $p>n$, then 
\[
\lambda(m,\Omega)>0.
\]
Moreover, if $n>1$, then $\lambda(m,\Omega)$ is not achieved.
\end{theo}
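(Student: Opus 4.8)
The plan is to prove the two assertions separately, exploiting throughout that for $p>n$ the trace bound \eqref{trace} is in fact the Morrey embedding $W^{1,p}(\Omega)\hookrightarrow C^{0,\gamma}(\overline\Omega)$ with $\gamma=1-\tfrac np$. In particular every $u\in W^{1,p}(\Omega)$ has a continuous representative, the point evaluation $u\mapsto u(x)$ is well defined and bounded, and one has the oscillation estimate $\operatorname{osc}_{\overline\Omega}u\le C\,\|\nabla u\|_{L^p(\Omega)}$.

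For the positivity I would produce a lower bound on $\ell_1(\sigma,\Omega)$ that is \emph{uniform} in $\sigma\in\Sigma_m(\de\Omega)$. Fix $u$ with $\|u\|_{L^p(\Omega)}=1$ and a small threshold $\delta>0$. If $\int_\Omega|\nabla u|^p\,dx\ge\delta^p$, then $\mathcal Q[\sigma,u]\ge\delta^p$. If instead $\int_\Omega|\nabla u|^p\,dx<\delta^p$, then the Poincar\'e--Wirtinger inequality forces the mean of $u$ to be bounded away from $0$, while the oscillation estimate bounds $u$ minus its mean by $C\delta$ uniformly on $\overline\Omega$; hence $u\ge c_1>0$ on $\overline\Omega$ once $\delta$ is small, so $\int_{\de\Omega}\sigma|u|^p\,d\mathcal H^{n-1}\ge c_1^p\,m$ and $\mathcal Q[\sigma,u]\ge c_1^p m$. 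Since the constants depend only on $\Omega,p,n$, we get $\ell_1(\sigma,\Omega)\ge\min\{\delta^p,c_1^p m\}>0$ for every $\sigma$, whence $\lambda(m,\Omega)>0$.

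For the non-achievement when $n>1$ I would first reduce $\lambda(m,\Omega)$ to a point-mass problem. For $x\in\de\Omega$ set
\[
\mathcal L(x)=\inf_{\substack{u\in W^{1,p}(\Omega)\\ u\neq 0}}\frac{\int_\Omega|\nabla u|^p\,dx+m\,|u(x)|^p}{\int_\Omega|u|^p\,dx},
\]
which is meaningful precisely because $u(x)$ is defined for $p>n$. Approximating the Dirac mass $m\delta_x$ by weights $\sigma_j\in\Sigma_m(\de\Omega)$ concentrating at $x$ and testing $\mathcal Q[\sigma_j,\cdot]$ with an arbitrary fixed $v$ gives $\int_{\de\Omega}\sigma_j|v|^p\to m|v(x)|^p$ by continuity of $v$, so $\lambda(m,\Omega)\le\mathcal L(x)$ for every $x$. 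Conversely, for any $\sigma\in\Sigma_m(\de\Omega)$ and any $u$ one has $\int_{\de\Omega}\sigma|u|^p\ge m\min_{\de\Omega}|u|^p$, and evaluating the numerator at a boundary minimum point of $|u|$ yields $\mathcal Q[\sigma,u]\ge\inf_{x}\mathcal L(x)$. Hence $\lambda(m,\Omega)=\inf_{x\in\de\Omega}\mathcal L(x)$. Now assume $\sigma^*\in\Sigma_m(\de\Omega)$ (an $L^1$ weight) realises the infimum, with positive eigenfunction $u^*$ from Proposition \ref{lemma1}, and let $\mu=\min_{\de\Omega}u^*$ be attained at $x^*$. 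Tracing equality through $\lambda=\mathcal Q[\sigma^*,u^*]\ge(\int_\Omega|\nabla u^*|^p+m\mu^p)/\int_\Omega|u^*|^p\ge\mathcal L(x^*)\ge\inf_x\mathcal L(x)=\lambda$ shows two things: first $\int_{\de\Omega}\sigma^*|u^*|^p=m\mu^p$, so $\sigma^*$ is supported on $\{u^*=\mu\}$; second $u^*$ actually minimises $\mathcal L(x^*)$, hence satisfies the weak Neumann identity $\int_\Omega|\nabla u^*|^{p-2}\nabla u^*\cdot\nabla\varphi\,dx=\lambda\int_\Omega (u^*)^{p-1}\varphi\,dx$ for every $\varphi\in W^{1,p}(\Omega)$ with $\varphi(x^*)=0$. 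Subtracting this from the Robin weak formulation \eqref{defsol} satisfied by $u^*$ for $\sigma^*$ leaves $\int_{\de\Omega}\sigma^*(u^*)^{p-1}\varphi\,d\mathcal H^{n-1}=0$ for all such $\varphi$. Since $n>1$ and $\sigma^*\in L^1$, the set $\{\sigma^*>0\}$ has positive $\mathcal H^{n-1}$-measure and therefore contains points other than $x^*$; choosing $\varphi\ge 0$ supported in a small boundary neighbourhood of such a point and avoiding $x^*$ makes the last integral strictly positive, a contradiction. For $n=1$ the support of $\sigma^*$ may reduce to the single point $x^*$, which is exactly why the statement is restricted to $n>1$.

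The main obstacle is this identification step: turning the equality $\lambda(m,\Omega)=\inf_x\mathcal L(x)$ into the structural fact that $u^*$ is a genuine minimiser of the point-mass quotient at $x^*$, and hence inherits the Neumann condition $\de_\nu u^*=0$ on $\de\Omega\setminus\{x^*\}$. This rests on the well-posedness of the point-mass functional for $p>n$ (continuity, and weak continuity via the compact Morrey embedding, of $u\mapsto u(x)$) and on a careful first variation of a quotient containing a boundary point evaluation. Once in place, the incompatibility between this Neumann condition and the strict sign $\de_\nu u^*<0$ forced on the positive-measure set $\{\sigma^*>0\}$ delivers the contradiction.
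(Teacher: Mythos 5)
Your positivity argument is correct and rests on the same mechanism as the paper's: for $p>n$ the Morrey embedding makes a normalized function with small gradient uniformly close to a nonzero constant on $\overline\Omega$, so the boundary term is of order $m$. The paper runs this by contradiction and compactness (a minimizing sequence converges in $C^{0,\alpha}(\overline\Omega)$ to a constant $C$, forcing $C^p m=0$), while you turn it into an explicit uniform lower bound $\ell_1(\sigma,\Omega)\ge \min\{\delta^p, c_1^p m\}$; apart from the harmless point that one should work with $|u|$ (the mean could be negative), this part is fine.

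The non-achievement argument is genuinely different from the paper's. The paper assumes a minimizing pair $(\bar\sigma,\bar u)$, uses Proposition~\ref{ossmax} to conclude that $\bar u$ is not constant on $\partial\Omega$, asserts the strict inequality \eqref{positive}, and contradicts minimality by concentrating the weight near a boundary minimum point of $\bar u$, which strictly decreases $\mathcal Q[\cdot,\bar u]$. You instead first prove $\lambda(m,\Omega)=\inf_{x\in\partial\Omega}\mathcal L(x)$ (in substance the Dirac characterization of Theorem~\ref{carmin}, proved here directly and correctly), and then exploit the resulting chain of equalities: $\sigma^*$ must be supported on the boundary minimum set $\{u^*=\mu\}$, $u^*$ must minimize the point-mass quotient at $x^*$, and the Robin and constrained-Neumann Euler--Lagrange identities are incompatible. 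This is more work, but also more robust: it covers exactly the configuration (weight concentrated on the minimum set of the eigenfunction) in which the paper's inequality \eqref{positive} degenerates to an equality and the concentration perturbation yields no strict decrease.

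There is, however, one concrete gap in your final step. The conclusion that $\int_{\partial\Omega}\sigma^*(u^*)^{p-1}\varphi\,d\mathcal H^{n-1}>0$ for a suitable $\varphi\ge 0$ vanishing at $x^*$ requires $u^*>0$ on $\{\sigma^*>0\}$, i.e. $\mu:=\min_{\partial\Omega}u^*>0$. Proposition~\ref{lemma1} gives positivity of $u^*$ only in $\Omega$ (interior Harnack); for $p>n$ the eigenfunction is continuous on $\overline\Omega$, but nothing you quote excludes that it vanishes at some boundary points, and your own equality chain places $\sigma^*$ precisely on $\{u^*=\mu\}$: if $\mu=0$, then $\sigma^*(u^*)^{p-1}=0$ $\mathcal H^{n-1}$-a.e., the integral above vanishes for every $\varphi$, and no contradiction results. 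The gap closes in one line inside your framework: testing \eqref{defsol} with $\varphi\equiv 1$ and using the support property gives $\mu^{p-1}m=\int_{\partial\Omega}\sigma^*(u^*)^{p-1}\,d\mathcal H^{n-1}=\lambda(m,\Omega)\int_\Omega (u^*)^{p-1}\,dx>0$, where positivity follows from the first part of the theorem and from $u^*>0$ in $\Omega$; hence $\mu>0$. With this in hand your identity becomes $\mu^{p-1}\int_{\partial\Omega}\sigma^*\varphi\,d\mathcal H^{n-1}=0$ for every admissible $\varphi$ with $\varphi(x^*)=0$, and since $\{x^*\}$ is $\mathcal H^{n-1}$-null when $n>1$, this forces $\sigma^*=0$ a.e.\ on $\partial\Omega$, contradicting $\int_{\partial\Omega}\sigma^*\,d\mathcal H^{n-1}=m$.
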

\begin{proof}
Let $p >n$. We first show, arguing by contradiction, that the infimum is positive. Let us suppose that $\sigma_{k}\in \Sigma_{m}(\de\Omega)$ and $u_{k}\in W^{1,p}(\Omega)$ are such that
\[
\ell_{1}(\sigma_{k},\Omega)=\mathcal Q[\sigma_{k},u_{k}]\rightarrow 0\quad \text{as }k\to+\infty.
\]
Moreover, we assume that $u_{k}\ge 0$ and $\|u_{k}\|_{p}=1$. Hence, we have that
\begin{equation}
\label{propeq1}
\int_{\Omega} |\nabla u_{k}|^{p}dx\rightarrow 0,\qquad \int_{\de \Omega}\sigma_{k}\, u_{k}^{p}\, d\mathcal H^{n-1}\rightarrow 0.
\end{equation}
Together with the Morrey inequality, see e.g.~\cite[Thm.~5.6.5]{evans}, and the condition $\|u_{k}\|_{p}=1$, we have that, up to a subsequence, $u_{k}$ converges in $C^{0,\alpha}(\bar\Omega)$ to a constant $C>0$. Hence, passing to the limit in \eqref{propeq1}, and recalling that $\int_{\de\Omega}\sigma_{k}=m$, we have
\[
0=\lim_{k\to \infty}\left(\int_{\de\Omega}\sigma_{k}(u_{k}^{p}-C^{p})d\mathcal H^{n-1}+C^{p}\int_{\de\Omega}\sigma_{k}d\mathcal H^{n-1}\right)=C^{p}m
\]
that gives that $C=0$. This contradicts the condition $\|u_{k}\|_{p}=1$, and then the infimum $\lambda(m,\Omega)$ is positive.

Now we prove that if $p>n>1$, the infimum is not achieved. If $\lambda(m,\Omega)$ were a minimum, then $\bar{\sigma} \in \Sigma_m(\de \Omega)$ exists such that 
\[
\lambda(m,\bar{\sigma})= \mathcal{Q}[\bar\sigma,\bar u],
\]
where $\bar u\in W^{1,p}(\Omega)$, $\bar{u} \ge 0$ and $\bar{u}$ is not constant on $\de \Omega$, see Proposition \ref{ossmax}. Being $\bar u\in C^{0,\alpha}(\bar \Omega)$, we take $x_{0}\in \de \Omega$ such that $\bar u(x_{0})=\min_{ \de\Omega} \bar{u}$.  Then 
\begin{equation}
\label{positive}
\int_{\de \Omega} \bar{\sigma} \, |\bar{u}|^p\,d\mathcal H^{n-1}-m \,\bar u(x_{0})^p >0 .
\end{equation}
Let  $\sigma_j \in \Sigma_m(\de \Omega)$ such that  
\begin{equation} \label{convsigma}
\int_{\de \Omega} \sigma_j \, |\bar{u}|^p \,d\mathcal H^{n-1} \to m \,\bar u(x_{0})^p.
\end{equation}
For example, we can choose 
\begin{equation}  \label{convsigma-2}
\sigma_{j}(x)=
\left\{
\begin{array}{ll}
\alpha_{j} &\text{if }x\in B_{\frac1j}(x_{0})\cap \de \Omega,\\[.2cm]
0&\text{elsewhere},
\end{array}
\right.
\end{equation}
where $\alpha_{j}>0$ is a number such that $\|\sigma_{j}\|_{L^{1}(\de\Omega)}=m$. The continuity of $\bar u$ up to the boundary of $\Omega$ guarantees that \eqref{convsigma} holds. Hence, recalling \eqref{positive} there exists $k \in \mathbb{N}$ which satisfies
\[
\int_{\de \Omega} \sigma_k |\bar{u}|^p\,d\mathcal H^{n-1}<\int_{\de \Omega} \bar{\sigma} |\bar{u}|^p\,d\mathcal H^{n-1}.
\]
This implies that $\mathcal{Q}[\sigma_k, \bar{u}]<\lambda(m,\Omega)$ which is a contradiction.
\end{proof}

\subsubsection{\bf The relaxed problem and the concentration effect} In view of Theorem \ref{thmp>n} it is natural,  for $p>n$, to consider the relaxed variational problem
\begin{equation} \label{relax}
\ell_{1}(\mu,\Omega)=   \inf_{u\in W^{1,p}(\Omega)}\frac{\displaystyle\int_{\Omega}|\nabla u|^{p} dx+\int_{\de\Omega}|u| ^{p}\, d\mu}{\displaystyle\int_{\Omega}|u|^{p}dx}, \qquad \mu\in \mathcal M(m), 
\end{equation}
where 
$$
\mathcal M(m) : = \left\{ \text{set of Radon measures on}\  \de \Omega \ \text{ such that}\  \mu(\de \Omega)=m \right\} .
$$ 
Moreover, we introduce the subset of $\mathcal M(m)$ consisting of Dirac measures concentrated at a boundary point of $\Omega$. 
$$
\mathcal D(m) : = \left\{ \mu\in\mathcal M(m) \, : \ \exists\, x \in\partial\Omega \, :  \, \int_{\de\Omega}|u| ^{p}d\mu= m\, |u(x)|^p\ \ \  \forall \ u\in W^{1,p}(\Omega)  \right\} .
$$ 

\medskip

\noindent Armed with this notation we can show that $\lambda(m,\Omega)$ is equal to the minimum of $\ell_1(\mu,\Omega)$ on $\mathcal D(m)$.  

\begin{theo}
\label{carmin}
Let $p>n$. Then for any $m>0$ there exists $x_m\in\de\Omega$ such that 
\begin{equation} \label{eq-dirac}
\lambda(m,\Omega) = \ell_1(\mu_m,\Omega) = \inf_{\mu\in \mathcal D(m)}\ell_{1}(\mu,\Omega), 
\end{equation}
where $\mu_m\in \mathcal D(m)$ is the Dirac measure concentrated at  $x_m$.
\end{theo}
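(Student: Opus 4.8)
The plan is to study the function $f:\partial\Omega\to[0,\infty)$ defined by $f(x)=\ell_1(\mu_x,\Omega)$, where $\mu_x$ denotes the Dirac measure of mass $m$ concentrated at $x$, so that $\int_{\partial\Omega}|u|^p\,d\mu_x=m\,|u(x)|^p$. Since $p>n$, Morrey's inequality gives the compact embedding $W^{1,p}(\Omega)\hookrightarrow C^{0,\alpha}(\overline\Omega)$, hence the pointwise trace $u\mapsto u(x)$ is well defined and continuous on $W^{1,p}(\Omega)$; in particular $f$ is well defined and $\inf_{\mu\in\mathcal D(m)}\ell_1(\mu,\Omega)=\inf_{x\in\partial\Omega}f(x)=:D$. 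I would prove \eqref{eq-dirac} by establishing the two inequalities $\lambda(m,\Omega)\le D$ and $\lambda(m,\Omega)\ge f(x_m)\ge D$ for a suitable $x_m\in\partial\Omega$; together they force equality throughout and exhibit the minimizing Dirac mass $\mu_m=\mu_{x_m}$.

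For the inequality $\lambda(m,\Omega)\le D$ I would fix $x\in\partial\Omega$ and approximate $\mu_x$ by admissible densities. Taking $\sigma_j$ as in \eqref{convsigma-2}, concentrating its mass near $x$, the continuity of any fixed $u\in W^{1,p}(\Omega)$ up to the boundary yields $\int_{\partial\Omega}\sigma_j|u|^p\,d\mathcal H^{n-1}\to m\,|u(x)|^p$, whence $\mathcal Q[\sigma_j,u]\to\mathcal Q[\mu_x,u]$. Since $\sigma_j\in\Sigma_m(\partial\Omega)$ we have $\lambda(m,\Omega)\le\ell_1(\sigma_j,\Omega)\le\mathcal Q[\sigma_j,u]$, and passing to the limit gives $\lambda(m,\Omega)\le\mathcal Q[\mu_x,u]$. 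Taking the infimum over $u$ yields $\lambda(m,\Omega)\le f(x)$, and then the infimum over $x\in\partial\Omega$ gives $\lambda(m,\Omega)\le D$.

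For the reverse inequality together with the attainment I would take a minimizing sequence $\sigma_k\in\Sigma_m(\partial\Omega)$ with $\ell_1(\sigma_k,\Omega)\to\lambda(m,\Omega)$ and the associated positive eigenfunctions $u_k$ from Proposition \ref{lemma1}, normalized by $\|u_k\|_{L^p(\Omega)}=1$. Since both terms defining $\ell_1(\sigma_k,\Omega)$ are nonnegative, $\int_\Omega|\nabla u_k|^p\,dx$ is bounded, so $\{u_k\}$ is bounded in $W^{1,p}(\Omega)$; by the compact Morrey embedding a subsequence converges uniformly and weakly in $W^{1,p}(\Omega)$ to some $u\ge0$ with $\|u\|_{L^p(\Omega)}=1$. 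Let $x_k\in\partial\Omega$ be a minimum point of $u_k$ on $\partial\Omega$; then $\int_{\partial\Omega}\sigma_k u_k^p\,d\mathcal H^{n-1}\ge u_k(x_k)^p\int_{\partial\Omega}\sigma_k\,d\mathcal H^{n-1}=m\,u_k(x_k)^p$. Passing to a further subsequence with $x_k\to x_m\in\partial\Omega$, uniform convergence gives $u_k(x_k)\to u(x_m)$. Combining the bound $\ell_1(\sigma_k,\Omega)\ge\int_\Omega|\nabla u_k|^p\,dx+m\,u_k(x_k)^p$ with the weak lower semicontinuity of the Dirichlet integral and this convergence, I obtain
\begin{equation*}
\lambda(m,\Omega)=\lim_k\ell_1(\sigma_k,\Omega)\ge\int_\Omega|\nabla u|^p\,dx+m\,u(x_m)^p=\mathcal Q[\mu_{x_m},u]\ge f(x_m)\ge D.
\end{equation*}
Together with $\lambda(m,\Omega)\le D$ this forces $\lambda(m,\Omega)=D=f(x_m)$, so $\mu_m=\mu_{x_m}$ realizes the infimum and \eqref{eq-dirac} follows.

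The main obstacle is controlling the boundary term along the minimizing sequence: a priori the measures $\sigma_k\,d\mathcal H^{n-1}$ could spread out or lose mass in the limit, and without pointwise boundary information the link to a single Dirac mass would collapse. The resolution rests entirely on the hypothesis $p>n$: the compact embedding $W^{1,p}(\Omega)\hookrightarrow C^{0,\alpha}(\overline\Omega)$ makes the trace $u\mapsto u(x)$ continuous and supplies the uniform convergence needed for $u_k(x_k)\to u(x_m)$, while the elementary inequality $\int_{\partial\Omega}\sigma_k u_k^p\,d\mathcal H^{n-1}\ge m\min_{\partial\Omega}u_k^p$ is precisely the mechanism by which any spread-out density is outperformed by concentrating its whole mass at the point where the eigenfunction is smallest. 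I would also remark that the displayed chain, being a chain of equalities once the endpoints coincide, shows that $u$ is in fact a minimizer for $\ell_1(\mu_{x_m},\Omega)$.
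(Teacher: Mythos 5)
Your proof is correct, and its skeleton --- establishing $\lambda(m,\Omega)\le\inf_{\mu\in\mathcal D(m)}\ell_1(\mu,\Omega)$ by approximating Dirac masses with the concentrated densities \eqref{convsigma-2}, and the reverse inequality by applying Morrey compactness to a minimizing sequence of eigenfunctions --- is the same as the paper's; but you execute both halves differently, and in both cases more economically. In the upper bound, the paper tests with the minimizer $u_\mu$ of the relaxed problem (whose existence it never actually establishes), whereas you test with an arbitrary $u\in W^{1,p}(\Omega)$ and infimize afterwards, so no existence result for relaxed minimizers is needed. In the lower bound, the paper extracts a weak-* limit $\mu\in\mathcal M(m)$ of the minimizing densities $\sigma_j$, asserts the identity $\lambda(m,\Omega)=\int_\Omega|\nabla\bar u_m|^p\,dx+\int_{\de\Omega}\bar u_m^p\,d\mu$ (which, as written, is only justified as the inequality $\ge$ via weak lower semicontinuity --- fortunately all that is needed), and only then passes to the Dirac mass at the minimum point of the limit function $\bar u_m$. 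You avoid the limit measure altogether: the elementary bound $\int_{\de\Omega}\sigma_k u_k^p\,d\mathcal H^{n-1}\ge m\,u_k(x_k)^p$ at the per-$k$ minimum points $x_k$, combined with compactness of $\de\Omega$ and the uniform convergence giving $u_k(x_k)\to u(x_m)$, closes the argument with no measure-theoretic compactness at all. Both routes produce the same limit objects ($u$, $x_m$, and the identity $\lambda(m,\Omega)=\int_\Omega|\nabla u|^p\,dx+m\,u(x_m)^p$ with $\|u\|_p=1$), which the paper reuses in Propositions \ref{prop-xm} and \ref{pro-inf-lb}, so nothing downstream is lost by your variant. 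One cosmetic point: the embedding $W^{1,p}(\Omega)\hookrightarrow C^{0,\alpha}(\overline\Omega)$ with $\alpha=1-\frac np$ is continuous but not compact; compactness holds into $C^{0,\beta}(\overline\Omega)$ for $\beta<1-\frac np$ (or into $C(\overline\Omega)$), which is what your argument actually uses --- the paper glosses over the same point.
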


\begin{proof}
Let $\mu\in \mathcal D(m)$ and let $u_\mu$ be the corresponding minimizer for $\ell_1(\mu,\Omega)$. 
In view of the proof of Theorem \ref{thmp>n}, see equations \eqref{convsigma} and \eqref{convsigma-2}, there exists a sequence $\sigma_k\in \Sigma_m(\partial\Omega)$ such that 
$$
\int_{\de \Omega} \sigma_k \, |u_\mu|^p \,d\mathcal H^{n-1} \ \to\ \int_{\de \Omega}  |u_\mu|^p \, d\mu\, ,
$$
as $k\to \infty$. This shows that 
\begin{equation} \label{les-dir}
\lambda(m,\Omega)  \ \leq \  \ell_1(\mu,\Omega)  \qquad \forall\, \mu\in \mathcal D(m),
\end{equation}
and therefore 
\begin{equation} \label{les-dir-inf}
\lambda(m,\Omega) \, \leq \,  \inf_{\mu\in \mathcal D(m)}\ell_{1}(\mu,\Omega) .
\end{equation}
To prove the opposite inequality let $\sigma_{j}\in \Sigma_{m}(\de\Omega)$ be a minimizing sequence for $\lambda(m,\Omega)$. In other words, $\ell_{1}(\sigma_{j},\Omega)\to \lambda(m,\Omega)$. We denote by $u_{j,m}\in W^{1,p}(\Omega)$ the nonnegative functions such that $\|u_{j,m}\|_{L^{p}}=1$ and $\ell_{1}(\sigma_{j},\Omega)=\mathcal Q[\sigma_{j},u_{j,m}]$. Then 
\begin{equation}
\label{infj}
\int_{\Omega}|\nabla u_{j,m}|^{p} \,dx +\int_{\de \Omega}\sigma_j\, u_j^p \, d \mathcal{H}^{n-1} \to \lambda(m,\Omega).
\end{equation}
Being $p>n$ and $\de \Omega$ of class $C^{1,\eps}$, equation \eqref{infj} and the Morrey inequality, assure that $u_{j,m}$ is a bounded sequence in $C^{0,\alpha}(\bar\Omega)$. Hence, up to a subsequence, $u_{j,m}$ converges uniformly to some nonnegative $\bar u_m\in C^{0,\alpha}(\bar\Omega)$. On the other hand, $\sigma_{j}$ is uniformly bounded in $L^1(\de\Omega)$. Hence it contains a subsequence,  which we still denote by $\sigma_j$, converging weakly in the sense of measures to some $\mu\in \mathcal M(m) $. Then, $\mu(\de\Omega)=m$ and
\[
\lambda(m,\Omega)=
\int_{\Omega}|\nabla \bar u_m|^{p} \,dx +\int_{\de \Omega} \bar u_m^p \,d \mu.
\]
Now, let $x_m\in\de\Omega$ be such that $\bar u_m(x_m)=\min_{\de\Omega}\bar u_m$, and consider $\mu_m=m\delta_{x_m}$, where $\delta_{x_m}$ is the Dirac measure of unit mass concentrated at $x_m$. Then
\begin{align} 
\lambda(m,\Omega) & =
\int_{\Omega}|\nabla \bar u_m|^{p} \,dx +\int_{\de \Omega} \bar u_m^p \,d \mu
\ \ge 
\int_{\Omega}|\nabla \bar u_m|^{p} \,dx +m\, \bar u_m(x_m)^p \nonumber \\
&=  \int_{\Omega}|\nabla \bar u_m|^{p} \,dx +\int_{\de \Omega} \bar u_m^p \,d \mu_m 
 \ge \ell_1(\mu_m,\Omega). \label{eq-major}
\end{align} 
This in combination with \eqref{les-dir-inf} completes the proof. 
\end{proof}

\smallskip

\noindent The point of concentration $x_m$ introduced in Theorem \ref{carmin} need not be unique, since the domain $\Omega$ might possess some rotational symmetries. Indeed, in case of a ball it is obvious that $\ell_1(\mu,\Omega)=\ell_1(\nu,\Omega)$ for all $\mu, \nu \in \mathcal D(m)$. In general, the position of $x_m$ depends in a complicated way on $m$ and $\Omega$.  

\smallskip

\noindent However, it is possible to characterize the behavior of convergent subsequences of $x_m$ in the limit  $m\to \infty$. Note that the existence of at least one convergent subsequence is guaranteed by the Bolzano-Weierstrass Theorem. 
It turns out that the limiting behavior os these sequences is related to the following eigenvalue problem: 
\begin{equation}
\label{dirpunto}
 \lambda_{1}(x;\Omega) := \inf\left\{\frac{\|\nabla u\|_{L^p(\Omega)}^p}{\| u\|_{L^p(\Omega)}^p};\; \, u\in W^{1,p}(\Omega), \ u(x)=0\right\}, \quad x\in\de\Omega .
\end{equation}
By Lemma \ref{cont}, see Section \ref{sec-aux}, the function $\lambda_1(\cdot\, ;\Omega)$ is continuous and therefore admits a minimum on $\de\Omega$:
\begin{equation}
\label{lambda0}
\lambda_1(\Omega):=\min\left\{\lambda_{1}(x;\Omega);\; \, x\in\de\Omega\right\}.
\end{equation}
We have

\begin{prop} \label{prop-xm}
Any convergent subsequence of $x_m$ tends to a point of minimum of $\lambda_1(\cdot\, ;\Omega)$ as  $m\to \infty$. 
\end{prop}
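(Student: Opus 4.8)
The plan is to show that any subsequential limit $\bar x$ of $x_m$ satisfies $\lambda_1(\bar x;\Omega)=\lambda_1(\Omega)$, which by \eqref{lambda0} is precisely the assertion that $\bar x$ minimizes $\lambda_1(\cdot\,;\Omega)$. The starting point is a uniform upper bound on $\lambda(m,\Omega)$. If $x^*\in\de\Omega$ realizes the minimum in \eqref{lambda0}, then $m\delta_{x^*}\in\mathcal D(m)$, and testing the Rayleigh quotient in \eqref{relax} with functions vanishing at $x^*$ gives $\ell_1(m\delta_{x^*},\Omega)\le\lambda_1(x^*;\Omega)=\lambda_1(\Omega)$. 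Hence Theorem \ref{carmin} yields
\begin{equation}\label{ub}
\lambda(m,\Omega)=\ell_1(\mu_m,\Omega)\le\ell_1(m\delta_{x^*},\Omega)\le\lambda_1(\Omega)\qquad\text{for all }m>0,
\end{equation}
so the numbers $\lambda(m,\Omega)$ remain bounded as $m\to\infty$; this is what drives the compactness below.

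Next I would fix a convergent subsequence $x_{m_k}\to\bar x\in\de\Omega$ and let $w_k\in W^{1,p}(\Omega)$ be the associated nonnegative minimizers of $\ell_1(\mu_{m_k},\Omega)$, normalized by $\|w_k\|_{L^p(\Omega)}=1$; their existence follows from Morrey compactness exactly as in the proof of Theorem \ref{carmin}. By construction,
\[
\int_\Omega|\nabla w_k|^p\,dx+m_k\,w_k(x_{m_k})^p=\lambda(m_k,\Omega)\le\lambda_1(\Omega),
\]
so $(w_k)$ is bounded in $W^{1,p}(\Omega)$, and since $p>n$ the Morrey inequality bounds it in $C^{0,\alpha}(\bar\Omega)$. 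Passing to a further subsequence, I may assume $w_k\to w$ uniformly on $\bar\Omega$ and weakly in $W^{1,p}(\Omega)$, with $w\ge0$ and $\|w\|_{L^p(\Omega)}=1$. The crucial observation is that boundedness of the penalty term together with $m_k\to\infty$ forces $w_k(x_{m_k})\to0$; combining the uniform bound $\|w_k-w\|_\infty\to0$ with the continuity of $w$ and $x_{m_k}\to\bar x$ gives $w_k(x_{m_k})\to w(\bar x)$, whence $w(\bar x)=0$.

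Finally, since $w(\bar x)=0$ and $w\not\equiv0$, the limit $w$ is admissible in \eqref{dirpunto}, so by weak lower semicontinuity of the $p$-Dirichlet energy and nonnegativity of the penalty I obtain
\[
\lambda_1(\bar x;\Omega)\le\frac{\int_\Omega|\nabla w|^p\,dx}{\int_\Omega|w|^p\,dx}=\int_\Omega|\nabla w|^p\,dx\le\liminf_{k\to\infty}\int_\Omega|\nabla w_k|^p\,dx\le\liminf_{k\to\infty}\lambda(m_k,\Omega)\le\lambda_1(\Omega),
\]
the last step being \eqref{ub}. Since \eqref{lambda0} gives the reverse inequality $\lambda_1(\bar x;\Omega)\ge\lambda_1(\Omega)$ for free, we conclude $\lambda_1(\bar x;\Omega)=\lambda_1(\Omega)$, i.e.\ $\bar x$ is a minimum point of $\lambda_1(\cdot\,;\Omega)$. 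I expect the main obstacle to be the middle step, namely passing to the limit in the evaluation $w_k(x_{m_k})$ when both the function and the point move simultaneously: this is exactly where the strict inequality $p>n$ is indispensable, since it is the equicontinuity supplied by Morrey's embedding, and not merely weak $W^{1,p}$ convergence, that makes the pointwise value at the drifting point $x_{m_k}$ converge to $w(\bar x)$.
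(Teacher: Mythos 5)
Your proof is correct and follows essentially the same route as the paper: a uniform upper bound $\lambda(m,\Omega)\le\lambda_1(\Omega)$ obtained from Dirac test measures, compactness of the normalized minimizers (weak $W^{1,p}$ plus uniform convergence via Morrey), the penalty term forcing the limit function to vanish at $\bar x$, and weak lower semicontinuity to conclude $\lambda_1(\bar x;\Omega)\le\liminf_m\lambda(m,\Omega)\le\lambda_1(\Omega)$. The only cosmetic difference is that you work with fresh minimizers of $\ell_1(\mu_{m_k},\Omega)$ rather than reusing the functions $\bar u_m$ already produced in the proof of Theorem \ref{carmin}, and you spell out the equicontinuity argument for $w_k(x_{m_k})\to w(\bar x)$, a detail the paper leaves implicit.
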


\begin{proof}
\noindent Let $\bar u_m$ and $x_m$ be as in the proof of Theorem \ref{carmin} so that $\|\bar u_m\|_p=1$ and 
$$
\int_{\Omega}|\nabla \bar u_m|^{p} \,dx +m\, \bar u_m(x_m)^p = \lambda(m,\Omega).
$$
By definition of $\lambda_1(x; \Omega)$,  and equation \eqref{les-dir} it follows that 
\begin{equation} \label{lambda-upperb1}
\lambda(m,\Omega) \ \leq \ \lambda_1(x; \Omega) \qquad \forall\ m, \ \  \forall\, x\in\partial\Omega. 
\end{equation}
Hence  
\begin{equation} \label{m-0}
\bar u_m(x_m) \to 0, \qquad m\to\infty.
\end{equation}
Now let $\bar u\in W^{1,p}(\Omega)$ be a weak limit of (a weakly convergent subsequence of) $\bar u_m$. Then $\|\bar u\|_p=1$ by the Rellich-Kondrachov Theorem, see e.g. \cite[Thm.~8.9]{LL}. 
Next consider a convergent subsequence of $x_m$. Let $\bar x\in\partial\Omega$ be its limit. Then by \eqref{m-0} we have $\bar u (\bar x)=0$. Hence $\bar u$ is an admissible test function for $\lambda_1(\bar x; \Omega)$ and from \eqref{eq-major} and the weak lower-semicontinuity of $\int_\Omega |\nabla u|^p$ we infer that
\begin{align*}
\liminf_{m\to\infty} \lambda(m,\Omega) \ & \geq\  \liminf_{m\to\infty} \int_\Omega |\nabla \bar u_m|^{p} \,dx \ \geq \ \int_\Omega |\nabla \bar u|^{p} \,dx \\
& \ \geq \lambda_1(\bar x; \Omega) \geq \lambda_1(\Omega)\, .
\end{align*}
In view of \eqref{lambda-upperb1} it thus follows that $\lambda_1(\bar x; \Omega)=\lambda_1(\Omega)$.
\end{proof}

\begin{rem}
As in the linear case $p=2$, see  \cite[Prop. 3.5]{kov}, it can be shown that the in the one-dimensional case, when $\Omega=]a,b[$, the minimum of $\ell_1(m,\Omega)$ is achieved and that 
\[
\lambda(m,\Omega) = \ell_{1}(\sigma_{a},\Omega)=\ell_{1}(\sigma_{b},\Omega)
\]
where $\sigma_{a},\sigma_{b}\in \Sigma_{m}(\{a,b\})$ are such that $\sigma_{a}(a)=m$, $\sigma_{a}(b)=0$, and  $\sigma_{a}(b)=0$, $\sigma_{a}(b)=m$.
\end{rem}

%%%%%%%%%%%%%%%%%%%%%%%%%%%%%%%%%%%%%%%%%%%%%%%
\subsection{Auxiliary results concerning $\lambda_1(x, \Omega)$}
\label{sec-aux}

\begin{lemma} \label{prop-min}
Let $p>n$ and let $x\in\Omega$. Then $\lambda_1(x;\Omega)$ defined by \eqref{dirpunto} is a minimum. 
\end{lemma}

\begin{proof}
Consider a minimising sequence $u_k\in W^{1,p}(\Omega)$ such that 
$$
\| u_k\|_{L^p(\Omega)} =1, \qquad u_k(x)=0 \qquad \forall \ k\in\N. 
$$
Clearly $u_k$ is bounded in $W^{1,p}(\Omega)$ and therefore contains a subsequence which converges weakly to a function $u\in W^{1,p}(\Omega)$. The Morrey inequality then implies that $\| u\|_{L^p(\Omega)} =1$ and $u(x)=0$. Hence in view of the weak lower semi-continuity of $\|\nabla u\|_{L^p(\Omega)}^p$ we have 
$$
\lambda_{1}(x;\Omega) = \lim_{k\to\infty}  \|\nabla u_k\|_{L^p(\Omega)}^p \ \geq \ \|\nabla u\|_{L^p(\Omega)}^p \ \geq \ \lambda_{1}(x;\Omega)\, .
$$
This shows that $u$ is a minimiser for the problem \eqref{dirpunto}. 
\end{proof}

\begin{lemma}
\label{cont}
Let $p>n$. Then there exists a constant $C(n,p,\Omega)$ such that 
\begin{equation} \label{hoelder-cont}
|\lambda_1(x,\Omega) -\lambda_1(y,\Omega) | \ \leq \ C(n,p,\Omega)\, |x-y|^{1-\frac np} \qquad \forall\ x, y\in\partial\Omega.
\end{equation}
\end{lemma}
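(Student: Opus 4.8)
The plan is to compare $\lambda_1(x;\Omega)$ and $\lambda_1(y;\Omega)$ by using a minimiser for one of the two points, shifted by a constant, as a competitor for the other, and to quantify the resulting loss through the Morrey inequality, whose H\"older exponent is exactly $1-\frac np$.

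First I would record two uniform facts. Since every $u\in W_0^{1,p}(\Omega)$ vanishes at $x$ and is hence admissible in \eqref{dirpunto}, we have $\lambda_1(x;\Omega)\le \Lambda_{1}^{D}(\Omega)$ for all $x\in\de\Omega$; and since $p>n$, the Morrey inequality together with the $C^{1,\eps}$-regularity of $\de\Omega$ yields a constant $C_M=C_M(n,p,\Omega)$ with $|w(x)-w(y)|\le C_M\|w\|_{W^{1,p}(\Omega)}\,|x-y|^{1-\frac np}$ for all $w\in W^{1,p}(\Omega)$ and all $x,y\in\bar\Omega$. By Lemma \ref{prop-min} (whose proof, resting on the compact embedding $W^{1,p}(\Omega)\hookrightarrow C(\bar\Omega)$ valid for $p>n$, applies verbatim at boundary points, since weak $W^{1,p}$-convergence forces uniform convergence on $\bar\Omega$ and thus preserves the constraint $u(x)=0$) the infimum in \eqref{dirpunto} is attained. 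I fix a minimiser $u_x$ normalised by $\|u_x\|_{L^p(\Omega)}=1$, so that $u_x(x)=0$ and $\|\nabla u_x\|_{L^p(\Omega)}^p=\lambda_1(x;\Omega)$. Then $\|u_x\|_{W^{1,p}(\Omega)}^p=1+\lambda_1(x;\Omega)\le 1+\Lambda_{1}^{D}(\Omega)$, so the Morrey estimate gives $|u_x(y)|=|u_x(y)-u_x(x)|\le C_3\,|x-y|^{1-\frac np}$ with $C_3:=C_M\,(1+\Lambda_{1}^{D}(\Omega))^{1/p}$, a constant depending only on $n,p,\Omega$.

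Next I would carry out the shift. Setting $c:=u_x(y)$ and $v:=u_x-c$, we have $v\in W^{1,p}(\Omega)$, $v(y)=0$ and $\nabla v=\nabla u_x$, so $v$ is admissible for $\lambda_1(y;\Omega)$ and
\[
\lambda_1(y;\Omega)\ \le\ \frac{\|\nabla v\|_{L^p(\Omega)}^p}{\|v\|_{L^p(\Omega)}^p}=\frac{\lambda_1(x;\Omega)}{\|u_x-c\|_{L^p(\Omega)}^p}.
\]
By the reverse triangle inequality $\|u_x-c\|_{L^p(\Omega)}\ge 1-|c|\,|\Omega|^{1/p}\ge 1-a$, where $a:=C_3\,|\Omega|^{1/p}\,|x-y|^{1-\frac np}$. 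Restricting to $|x-y|$ small enough that $a\le \tfrac12$, and using that $t\mapsto(1-t)^{-p}-1$ is Lipschitz on $[0,\tfrac12]$ with a constant $K(p)$ and vanishes at $t=0$, I obtain $\lambda_1(y;\Omega)-\lambda_1(x;\Omega)\le \lambda_1(x;\Omega)\big((1-a)^{-p}-1\big)\le \Lambda_{1}^{D}(\Omega)\,K(p)\,a\le C\,|x-y|^{1-\frac np}$. Interchanging the roles of $x$ and $y$ then gives \eqref{hoelder-cont} for all $x,y$ with $|x-y|\le r_0$, where $r_0$ is the threshold at which $a=\tfrac12$.

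Finally I would dispose of the complementary range $|x-y|>r_0$ by the trivial bound: since $0\le\lambda_1(\cdot\,;\Omega)\le\Lambda_{1}^{D}(\Omega)$ we have $|\lambda_1(x;\Omega)-\lambda_1(y;\Omega)|\le\Lambda_{1}^{D}(\Omega)\le \Lambda_{1}^{D}(\Omega)\,r_0^{-(1-\frac np)}\,|x-y|^{1-\frac np}$, and absorbing both regimes into one constant $C(n,p,\Omega)$ completes the argument. The only genuinely delicate point is the uniformity of constants: one must bound $\|u_x\|_{W^{1,p}(\Omega)}$ independently of $x$ (achieved through $\lambda_1\le\Lambda_{1}^{D}$) so that the Morrey estimate for $u_x(y)$ carries an $x$-independent constant. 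If one prefers not to rely on attainment at the boundary, the identical computation runs with near-minimisers $u_x$ satisfying $\|\nabla u_x\|_{L^p(\Omega)}^p\le\lambda_1(x;\Omega)+\eps$, letting $\eps\to 0$ at the end.
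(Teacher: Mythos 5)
Your proposal is correct and follows essentially the same route as the paper: both take the normalised minimiser at $x$, use its shift $u_x(\cdot)-u_x(y)$ as a competitor for $\lambda_1(y;\Omega)$, control $|u_x(y)|$ by the Morrey inequality (with uniformity of constants coming from $\lambda_1\le\Lambda_1^D(\Omega)$), and then expand the resulting quotient for small $|x-y|$ — the only cosmetic difference being that you bound the denominator via Minkowski's inequality and a Lipschitz bound on $(1-t)^{-p}$, whereas the paper uses a pointwise Taylor inequality plus H\"older and the bound $\frac{1}{1-t}\le 1+2t$. Your explicit treatment of the regime $|x-y|>r_0$ (which the paper leaves implicit by restricting to $|x-y|$ small) is a welcome, if routine, completion.
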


\begin{proof}
By Lemma \ref{prop-min} there exist functions $u, v \in W^{1,p}(\Omega)$ such that 
\begin{equation} \label{u-v-norm}
 \| u\|_{L^p(\Omega)}^p = \| v\|_{L^p(\Omega)}^p =1, 
\end{equation}
and
\begin{equation} \label{u-v}
\lambda_1(x,\Omega) = \|\nabla u\|_{L^p(\Omega)}^p, \ \ u(x)=0, \qquad \lambda_1(y,\Omega) = \|\nabla v\|_{L^p(\Omega)}^p, \ \ v(y)=0\, .
\end{equation}
Using $u(\cdot)-u(y)$ as a test function for $\lambda_1(y,\Omega)$ we obtain 
\begin{align} \label{1-est}
\lambda_1(y,\Omega) &\ \leq \ \frac{\|\nabla u\|_{L^p(\Omega)}^p}{\|u(\cdot)-u(y)\|_{L^p(\Omega)}^p} \, .
\end{align}
By the Taylor expansion
$$
|u(x)-u(y)|^p \ \geq \ |u(x)|^p -p \, |u(y)|\, |u(x)|^{p-1}
$$
holds for all $x\in\Omega$. Hence by using the H\"older inequality and \eqref{u-v-norm} we obtain
$$
\int_\Omega |u(x)-u(y)|^p\, dx \ \geq \ 1 -p\, |u(y)| \int_\Omega |u(x)|^{p-1}\, dx \ \geq \ 1-p\, |u(y)| \, |\Omega|^{\frac 1p}.
$$
Inserting this lower bound into \eqref{1-est} and taking into account \eqref{u-v} gives 
\begin{align} \label{2-est}
\lambda_1(y,\Omega) &\ \leq \ \frac{\lambda_1(x,\Omega)}{1- p\, |u(y)| \, |\Omega|^{\frac 1p}} \, .
\end{align}
Now, equation\eqref{u-v} and the Morrey inequality yields
\begin{equation} \label{morrey}
|u(y)| \ \leq c\, |x-y|^{1-\frac np}, \qquad |v(x)| \ \leq c\, |x-y|^{1-\frac np},
\end{equation}
where $c$ depends only on $n,p$ and $\Omega$. Since $\frac{1}{1-t} \leq 1 +2t$ for $0\leq t\leq 1/2$, we conclude from \eqref{2-est} that for $|x-y|$ small enough
\begin{align*} \label{upperb-1}
\lambda_1(y,\Omega) &\ \leq \ \lambda_1(x,\Omega) +2 p c\, |\Omega|^{\frac 1p} \,  |x-y|^{1-\frac np} .
\end{align*}
In the same way, using  $v(\cdot)-v(x)$ as a test function for $\lambda_1(x,\Omega)$ we get 
$$
\lambda_1(x,\Omega) \ \leq \ \lambda_1(y,\Omega) +2 p c\, |\Omega|^{\frac 1p} \,  |x-y|^{1-\frac np} .
$$
This proves the claim. 
\end{proof}

%%%%%%%%%%%%%%%%%%%%%%%%%%%%%%%%%%%%%%%%%%%%%%%%%%%%%%%
\section{\bf Estimates on $\lambda(m,\Omega)$ and $\Lambda(m,\Omega)$}
\label{sec-estimates}
\noindent The aim of this section is to establish lower bounds for $\lambda(m,\Omega)$ and $\Lambda(m,\Omega)$ in terms of $m$ and $|\Omega|$.  
 
\subsection{A lower bound for $\Lambda(m,\Omega)$}
\begin{prop} 
\label{below}
For any $p>1$ and $m>0$ it holds that
\begin{equation}
\label{belsup}
\Lambda(m,\Omega)\ge \frac{m\, \Lambda_{1}^{D}(\Omega)}{\left[\left(|\Omega|\, \Lambda_{1}^{D}(\Omega)\right)^{1/(p-1)}+m^{1/(p-1)}\right]^{p-1}}.
\end{equation}
\end{prop}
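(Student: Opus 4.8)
The plan is to reduce the claim to a sharp upper bound on the quantity $m$ as a function of $\Lambda(m,\Omega)$ coming from Theorem \ref{thm3.1}, and then to invert a monotone relation. Write $\Lambda=\Lambda(m,\Omega)=\xi(m)$ and let $w:=u_m=\xi(m)^{1/(p-1)}u_{\xi(m)}+1$ be the associated maximiser. By the construction in Theorem \ref{thm3.1} the function $w\in W^{1,p}(\Omega)$ satisfies $w\ge 1$ in $\Omega$, $w\equiv 1$ on $\de\Omega$, solves $-\Delta_p w=\Lambda\,w^{p-1}$ in $\Omega$, and carries the normalisation
\[
m \;=\; \Lambda \int_{\Omega} w^{p-1}\,dx .
\]
The whole point is then to bound $\int_\Omega w^{p-1}$, equivalently $m/\Lambda$, from above in terms of $|\Omega|$ and the ratio $\Lambda/\Lambda_1^D(\Omega)$.

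First I would test the equation for $w$ against $w-1\in W_0^{1,p}(\Omega)$ (the boundary term drops since $w-1$ vanishes on $\de\Omega$), which gives
\[
\int_\Omega |\nabla w|^p\,dx = \Lambda\int_\Omega w^p\,dx - \Lambda\int_\Omega w^{p-1}\,dx = \Lambda\int_\Omega w^p\,dx - m .
\]
Since $w-1\in W_0^{1,p}(\Omega)$, the variational definition of $\Lambda_1^D(\Omega)$ yields $\int_\Omega|\nabla w|^p \ge \Lambda_1^D(\Omega)\int_\Omega (w-1)^p$, and combining these I obtain
\[
\Lambda_1^D(\Omega)\int_\Omega (w-1)^p\,dx \;\le\; \Lambda\int_\Omega w^p\,dx - m .
\]
The crucial point is to estimate $\int_\Omega(w-1)^p$ from below \emph{sharply}: a crude pointwise expansion such as $(w-1)^p\ge w^p-p\,w^{p-1}$ loses too much and does not reproduce the correct behaviour as $m\to 0$. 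Instead I would use Minkowski's inequality in $L^p(\Omega)$, namely $\|w\|_{L^p(\Omega)}\le \|w-1\|_{L^p(\Omega)}+|\Omega|^{1/p}$, valid for all $p>1$, which (using $w\ge 1$) gives
\[
\int_\Omega (w-1)^p\,dx \;\ge\; \left(\|w\|_{L^p(\Omega)}-|\Omega|^{1/p}\right)^p .
\]

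Writing $N:=\|w\|_{L^p(\Omega)}\ge |\Omega|^{1/p}$, $V:=|\Omega|$ and $s:=(\Lambda/\Lambda_1^D(\Omega))^{1/(p-1)}\in\,]0,1[$, the last two displays combine to
\[
m \;\le\; \Lambda\, N^p - \Lambda_1^D(\Omega)\,(N-V^{1/p})^p = \Lambda_1^D(\Omega)\Big[\, s^{p-1} N^p - (N-V^{1/p})^p\,\Big] .
\]
Since this holds for the actual value of $N$, it suffices to maximise the right-hand side over all $N\ge V^{1/p}$. A direct computation shows the maximiser is $N^\ast=V^{1/p}/(1-s)$, with maximal value $\Lambda_1^D(\Omega)\,s^{p-1}V/(1-s)^{p-1}=\Lambda\,V/(1-s)^{p-1}$, whence
\[
m \;\le\; \frac{\Lambda\,|\Omega|}{\big(1-(\Lambda/\Lambda_1^D(\Omega))^{1/(p-1)}\big)^{p-1}} .
\]
The right-hand side is a strictly increasing function of $\Lambda$ on $]0,\Lambda_1^D(\Omega)[$; inverting this relation, or equivalently checking that the quantity on the right of \eqref{belsup} turns the last display into an equality, yields \eqref{belsup}. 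The main obstacle is precisely the sharpness: one must use Minkowski's inequality rather than a pointwise expansion and then carry out the optimisation over $N$ exactly, so that the constant in \eqref{belsup} is reproduced and the bound becomes asymptotically tight both as $m\to 0$, where it matches $m/|\Omega|$, and as $m\to\infty$, where it tends to $\Lambda_1^D(\Omega)$.
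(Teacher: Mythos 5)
Your proof is correct: every step checks out, including the testing identity $\int_\Omega |\nabla w|^p\,dx=\Lambda\int_\Omega w^p\,dx-m$, the optimisation over $N$ (the maximiser $N^\ast=|\Omega|^{1/p}/(1-s)$ and the maximal value $\Lambda|\Omega|/(1-s)^{p-1}$ are right), and the inversion, which reproduces \eqref{belsup} exactly; note also that $s<1$ is guaranteed since $\Lambda(m,\Omega)=\xi(m)\in\,]0,\Lambda_1^D(\Omega)[$ by construction.

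The route, however, is organised differently from the paper's. The paper observes that, since the optimal eigenfunction $u_m$ is constant on $\de\Omega$, one has $\Lambda(m,\Omega)=\inf_{u\in\mathcal F}\mathcal{Q}[\sigma_m,u]$ over the class $\mathcal F$ of functions with $\|u\|_{L^p(\Omega)}=1$ that equal a constant $k\ge 0$ on $\de\Omega$; for such $u$ it bounds $\mathcal{Q}[\sigma_m,u]\ge \Lambda_1^D(\Omega)\,\bigl|1-k|\Omega|^{1/p}\bigr|^p+k^p m$ (Dirichlet eigenvalue inequality for $u-k\in W_0^{1,p}(\Omega)$ plus Minkowski) and then \emph{minimises over $k$}, obtaining \eqref{belsup} directly, with no inversion. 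You instead fix the boundary value at $1$, use the Euler--Lagrange equation and the mass identity $m=\Lambda\int_\Omega w^{p-1}\,dx$ to get an upper bound on $m$ as a function of $\Lambda$, \emph{maximise over $N=\|w\|_{L^p(\Omega)}$}, and invert a monotone relation. By scale invariance of the Rayleigh quotient the two computations are dual to each other (your $N$ corresponds to the paper's $1/k$), and the two key inequalities --- $\Lambda_1^D$ applied to the function minus its boundary trace, and Minkowski --- are identical; in particular your insistence on Minkowski rather than a pointwise expansion matches the paper. What the paper's version buys is economy: it needs neither the PDE satisfied by $w$ nor the normalisation $m=\Lambda\int_\Omega w^{p-1}\,dx$, only the fact (Proposition \ref{ossmax}/Theorem \ref{thm3.1}) that the maximiser's eigenfunction has constant boundary trace, and it lands on \eqref{belsup} without inverting anything. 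What your version buys is that it makes the sharpness structure transparent: the bound appears as the exact inverse of the relation $m\le \Lambda|\Omega|/(1-(\Lambda/\Lambda_1^D(\Omega))^{1/(p-1)})^{p-1}$, with equality precisely when all the intermediate inequalities saturate, which explains the matching asymptotics $m/|\Omega|$ as $m\to 0$ and $\Lambda_1^D(\Omega)$ as $m\to\infty$ recorded in the paper's final corollary.
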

\begin{proof}
For the sake of simplicity, we denote again $\Lambda_1^{D}=\Lambda_{1}^{D}(\Omega)$.
From the proof of Theorem \ref{thm3.1} it follows that
\[
\Lambda(m,\Omega) = \ell_{1}(\sigma_{m},\Omega)
= \inf_{u\in \mathcal F}\mathcal Q[\sigma_{m},u]
\]
where
\[
\mathcal F= \{u\in W^{1,p}(\Omega),\,\|u\|_{p}=1, u=k\text{ on }\de\Omega,\,0\le k \}.
 \]
If $u\in \mathcal F$, recalling the variational characterisation of $\Lambda_1^{D}$ we have that
\begin{align}
\notag
\mathcal Q[\sigma_{m},u] &= \int_{\Omega}|\nabla u|^{p}dx + k^{p}m
= \int_{\Omega}|\nabla (u-k)|^{p}dx + k^{p}m \\
 \notag &\ge \Lambda_1^{D}\int_{\Omega}|u-k|^{p}dx +k^{p}m \\
&\label{eqproof}
\ge \Lambda_1^{D}\left|1-k|\Omega|^{\frac1p}\right|^{p}+k^{p}m,
\end{align}
where the last line follows by the Minkowski inequality and the condition $\|u\|_{L^{p}(\Omega)}=1$. By minimising \eqref{eqproof} with respect to $k$ we have that
\[
\mathcal{Q}[\sigma_{m},u] \ge
\frac{m\Lambda_1^{D}}{\left[\left(|\Omega|\, \Lambda_1^{D}\right)^{1/(p-1)}+m^{1/p-1}\right]^{p-1}},
\]
and the thesis follows.
\end{proof}

\subsection{A lower bound for $\lambda(m,\Omega)$}

\begin{prop} \label{pro-inf-lb}
Let $p>n$. Then
\begin{equation}
\label{ineq:inflow}
\lambda(m,\Omega)\ge\frac{m\lambda_1(\Omega)}{\left[\left(|\Omega|\, \lambda_1(\Omega)\right)^{1/(p-1)}+m^{1/(p-1)}\right]^{p-1}},
\end{equation}
where $\lambda_1(\Omega)$ is defined in \eqref{lambda0}.
\end{prop}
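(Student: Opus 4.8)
The plan is to mirror almost verbatim the argument of Proposition \ref{below}, replacing the Dirichlet eigenvalue $\Lambda_1^D(\Omega)$ by $\lambda_1(\Omega)$ and using the concentration result of Theorem \ref{carmin} to reduce the boundary term to a single point. By Theorem \ref{carmin} there is a point $x_m\in\partial\Omega$ with $\lambda(m,\Omega)=\ell_1(\mu_m,\Omega)$, where $\mu_m=m\,\delta_{x_m}$, and the associated nonnegative minimiser $\bar u_m\in C^{0,\alpha}(\bar\Omega)$ (Morrey inequality, as in the proof of Theorem \ref{carmin}) can be normalised by $\|\bar u_m\|_{L^p(\Omega)}=1$, so that
\[
\lambda(m,\Omega)=\int_\Omega |\nabla \bar u_m|^p\,dx + m\,\bar u_m(x_m)^p .
\]

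First I would set $k:=\bar u_m(x_m)\ge 0$ and use $\bar u_m-k$ as a test function in the problem \eqref{dirpunto} defining $\lambda_1(x_m;\Omega)$; this is admissible precisely because $(\bar u_m-k)(x_m)=0$. Since $\nabla(\bar u_m-k)=\nabla \bar u_m$, this yields
\[
\int_\Omega |\nabla \bar u_m|^p\,dx \ \ge\ \lambda_1(x_m;\Omega)\int_\Omega |\bar u_m-k|^p\,dx \ \ge\ \lambda_1(\Omega)\int_\Omega |\bar u_m-k|^p\,dx,
\]
where the second inequality is just the definition \eqref{lambda0}. Then, exactly as in \eqref{eqproof}, the Minkowski inequality together with $\|\bar u_m\|_{L^p(\Omega)}=1$ gives $\int_\Omega |\bar u_m-k|^p\,dx \ge \left|1-k\,|\Omega|^{1/p}\right|^p$, whence
\[
\lambda(m,\Omega)\ \ge\ \lambda_1(\Omega)\,\left|1-k\,|\Omega|^{1/p}\right|^p + m\,k^p .
\]

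Finally I would minimise the right-hand side over all $k\ge 0$. This is the same scalar minimisation already carried out in the proof of Proposition \ref{below} with $\Lambda_1^D$ replaced by $\lambda_1(\Omega)$, and it produces exactly the right-hand side of \eqref{ineq:inflow}; the passage is legitimate because the displayed lower bound holds for the particular value $k=\bar u_m(x_m)$, hence a fortiori for the minimum over $k$.

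I do not expect a genuine obstacle here: the scalar optimisation is routine, and the real content is the reduction to a single Dirac mass furnished by Theorem \ref{carmin}, which is what converts the boundary integral into the pointwise term $m\,\bar u_m(x_m)^p$ and makes $\bar u_m-k$ an admissible competitor for \eqref{dirpunto}. The only points requiring a word of justification are the replacement of $\lambda_1(x_m;\Omega)$ by $\lambda_1(\Omega)$, which is immediate from \eqref{lambda0}, and the regularity $\bar u_m\in C^{0,\alpha}(\bar\Omega)$ ensuring that $\bar u_m(x_m)$ is well defined, which is guaranteed by the Morrey inequality since $p>n$.
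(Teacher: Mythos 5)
Your proof is correct and follows essentially the same route as the paper's: reduction via Theorem \ref{carmin} to the expression $\int_\Omega |\nabla \bar u_m|^p\,dx + m\,\bar u_m(x_m)^p$, then the test-function/Minkowski argument of Proposition \ref{below} with $\lambda_1(x_m;\Omega)$ in place of $\Lambda_1^D(\Omega)$, and the same scalar minimisation in $k$. The only cosmetic difference is that you replace $\lambda_1(x_m;\Omega)$ by $\lambda_1(\Omega)$ before optimising over $k$, whereas the paper does so afterwards, exploiting the monotonicity of the resulting bound in $\lambda_1$; the two orderings are trivially equivalent.
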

\begin{proof}
By Theorem \ref{carmin} we have
\begin{equation}
\label{step1}
\lambda(m,\Omega) = \ell_1(\mu_m,\Omega) = \int_{\Omega}|\nabla \bar u_m|^{p} dx+ m\,  \bar u_m^p(x_m),
\end{equation}
where $\|\bar u_m\|_{L^p(\Omega)}=1.$ Then arguing as in the proof of Proposition \ref{below} and recalling \eqref{dirpunto}, by \eqref{step1} we have
\begin{equation*}
\lambda(m,\Omega)=\ell_1( \mu_m,\Omega) \ge \frac{m\lambda_1(x_m;\Omega)}{\left[\left(|\Omega|\, \lambda_1(x_m;\Omega)\right)^{1/(p-1)}+m^{1/p-1}\right]^{p-1}}.
\end{equation*}
Maximizing the right hand side with respect to $x_m$ gives the claim.
\end{proof}

\begin{rem}
Clearly, by Proposition \ref{inf0}, the inequality \eqref{ineq:inflow} is trivial if $p\le n$, being all the quantities involved equal to zero.
\end{rem}

%%%%%%%%%%%%%%%%%%%%%%%%%
\subsection{Upper bounds and limiting behavior}

In view of \eqref{lambda-upperb1}, for any $x\in\de\Omega$ and any $m>0$ it holds
\[
\lambda(m,\Omega)\le \lambda_{1}(\Omega) .
\]
This in combination with \eqref{uptriv} gives
\begin{equation} 
\lambda(m,\Omega)\le \min\left\{\lambda_1(\Omega),\frac{m}{|\Omega|}\right\}, \qquad \Lambda(m,\Omega)\le \min\left\{\Lambda_1^D(\Omega),\frac{m}{|\Omega|}\right\}
\end{equation}  
An immediate consequence of these estimates and Propositions \ref{below} and \ref{pro-inf-lb} is the following

\begin{cor}
We have 
\begin{align*}
\lim_{m\to0} \lambda(m,\Omega)& =
\lim_{m\to0} \Lambda(m,\Omega)=0,  \\
\lim_{m\to+\infty} \Lambda(m,\Omega)& =\Lambda_{1}^{D}(\Omega), \\
\lim_{m\to+\infty} \lambda(m,\Omega)& =\lambda_1(\Omega) \qquad \text{\rm if} \ p>n. 
\end{align*}
\end{cor}

\medskip

\begin{rem} We observe that it is possible to study the behavior of $\Lambda(m,\Omega)=\Lambda(m,\Omega;p)$ as $p\to 1$. It is well known that as $p\to 1$, the first Dirichlet eigenvalue $\Lambda_{1}^{D}(\Omega)=\Lambda_{1}^{D}(\Omega;p)$ converges to the Cheeger constant $h(\Omega)$, namely
\[
	h(\Omega)= \inf_{E\subset \Omega}\frac{P(E)}{|E|}
\]
(see for example the survey paper \cite{parini} and the references therein).
Hence, the bounds \eqref{belsup} and \eqref{uptriv} give
\begin{equation*}
%\min\left\{\frac{m}{|\Omega|}, h(\Omega) \right\}= 
\lim_{p\to 1} \Lambda(m,\Omega;p) = \min\left\{\frac{m}{|\Omega|}, h(\Omega) \right\}.
\end{equation*}
\end{rem}
\begin{rem}
We finally recall that if $\sigma(x)=\sigma$ is a positive constant, and $\Omega$ is a bounded convex set, it is possible to obtain a lower bound of $\ell_1(\sigma,\Omega)$ in terms of the inradius $R_\Omega$ of $\Omega$. Indeed, applying \cite[Proposition 3.1]{pota} (see also \cite[Theorem 4.5]{kov} for $p=2$) then
\[
\ell_{1}(\sigma,\Omega) \ge \left(\frac{p-1}{p}\right)^{p} 
\frac{\sigma}{R_{\Omega}\left(1+\sigma^{\frac{1}{p-1}}R_{\Omega}\right)^{p-1}}
\]
\end{rem}

{\bf Acknowledgement.} This work has been partially supported by Gruppo Nazionale per Analisi Matematica, la Probabilit\`a e le loro Applicazioni (GNAMPA) of the Istituto Nazionale di Alta Matematica (INdAM).
The support of FIRB 2013 project ``Geometrical and qualitative aspects of PDE's'', and of MIUR-PRIN2010-11 grant for the project ``Calcolo delle variazioni'' (H.~K.), is also gratefully acknowledged. 

\medskip

\end{document}